\documentclass[12pt,a4paper]{amsart}
\usepackage{comment}
\usepackage[foot]{amsaddr}

\usepackage{mathrsfs}

\usepackage{mathtools}
\makeatletter
\tagsleft@false
\makeatother
\usepackage{float}
\usepackage{amsthm}  
\usepackage{rotating}
\usepackage{amsmath}
\usepackage{mathrsfs}
\usepackage{booktabs}
\usepackage{tikz}
\usepackage{multirow}
\usepackage{amsfonts}
\usepackage{amssymb}
\usepackage{enumerate}
\usepackage{latexsym}
\usepackage{subcaption}
\usepackage{xcolor}
\usepackage[shortlabels]{enumitem}
\usepackage{pdflscape}
\usepackage{adjustbox}
\usepackage{graphicx}
\usepackage{hyperref}
\usepackage[numbers,sort&compress]
{natbib}
\setlist[enumerate,1]{label={\upshape(\roman*)}}

\makeatletter

\newcommand{\Rmnum}[1]
{\expandafter\@slowromancap\romannumeral #1@}
\makeatother

\newtheorem{thm}{Theorem}[section]

\newtheorem{lemma}[thm]{Lemma}

\newtheorem{cor}[thm]{Corollary}

\newtheorem{example}[thm]{Example}
\newtheorem{defin}[thm]{Definition}

\newenvironment{definition}{\begin{defin} \rm}{\end{defin}}
\theoremstyle{definition}
\newtheorem{remark}[thm]{Remark}

\makeatletter
\renewcommand{\p@subfigure}{\thefigure}
\makeatother

\title[Two-disjoint-cycle-cover vertex pancyclicity of split-star networks]{Two-disjoint-cycle-cover vertex pancyclicity of split-star networks}

\date{}
\thanks{$\dagger$These authors contributed equally to this work.
\newline
*Corresponding author}
\author[Liu]{Chang Liu$^{1,\dagger}$}
\author[Niu]{Ruichao Niu$^{2,\dagger}$}
\author[Yang]{Yuefeng Yang$^{1,*}$}

\address{$^1$School of Science,~China University of Geosciences,~Beijing 100083~China}
\email{$^1$cliu@email.cugb.edu.cn}
\address{$^{2}$College of Science, Minzu University of China,~Beijing 100081~China}
\email{$^2$niuruichao@muc.edu.cn}
\email{$^3$yangyf@cugb.edu.cn}

\begin{document}
	
	\begin{abstract}

	Let $r_1$ and $r_2$ be positive integers with $r_1 \le r_2$. A graph $G$ is called $2$-DCC vertex $[r_1,r_2]$-pancyclic if, for any two distinct vertices of $G$ and any integer $\ell \in [r_1,r_2]$, there exist two vertex-disjoint cycles of lengths $\ell$ and $|V(G)|-\ell$, respectively, containing the two vertices separately. In this paper, we investigate the two-disjoint-cycle-cover vertex pancyclicity of the split-star network $S_n^2$. We prove that $S_n^2$ is $2$-DCC vertex $[3,n!/2]$-pancyclic for $n\ge4$.

	\textit{Key words:} split-star networks; vertex-disjoint cycles; two-disjoint-cycle-cover; vertex-pancyclicity.
		
	\end{abstract}
	\maketitle
	\section{Introduction}
	The~$n$-dimensional split-star network introduced by Cheng, Lipman and Park~\cite{CLP01}, denoted by~$S_n^2$.
	Since then, many fundamental properties of~$S_n^2$~have been extensively studied.
	Zhao and Hao~\cite{ZH18}~determined the~$g$-extra connectivity of~$S_n^2$.
	Gu, Hao and Chang~\cite{GHC19,GHC21}~investigated the component connectivity and the edge connectivity of~$S_n^2$.
	Lin, Xu, Zhou and Hsieh analyzed the conditional diagnosability~\cite{LXZH15}~and Lin, Huang, Wang and Xu analyzed the restricted connectivity of~$S_n^2$, and further established its good-neighbor diagnosability~\cite{LHWX20}.
	More recently, Zhao and Wang~\cite{ZW23}~characterized the structure connectivity and substructure connectivity of~$S_n^2$.

   An interconnection network is often modeled as a graph. We follow Bondy and Murty~\cite{BM08}~for notation and terminology not defined in this paper. A {\em graph}~$G=(V,E)$~is an ordered pair consisting of a vertex set~$V$~and an edge set~$E$, where~$V$~is a finite set and~$E\subseteq\{(u,v)\mid u,v\in V,u\neq v\}$. We often write~$G=(V(G),E(G))$~to emphasize the vertex set and the edge set of~$G$. The number of vertices in $G$ is denoted by $|G|$. If~$e=(u,v)\in E(G)$, then~$u$~and~$v$~are called the ends of~$e$. A {\em path}~$P$~is often denoted by~$P=\langle v_1,v_2,\cdots,v_k\rangle$, where~$v_1$~is adjacent only to~$v_2$,~$v_k$~is adjacent only to~$v_{k-1}$, and~$v_i$~is adjacent exactly to~$v_{i-1}$~and~$v_{i+1}$~for all~$1<i<k$. To emphasize the ends of a path, we also write~$P[v_1,v_k]$. Similarly, a {\em cycle} is often denoted by~$C=\langle v_1,v_2,\cdots,v_k,v_1\rangle$, which is obtained from the path~$P[v_1,v_k]$~by adding an edge joining~$v_1$~and~$v_k$. The length of a path or a cycle is defined as the number of its edges. A path or a cycle of length~$k$~is called a~{\em $k$-path} or~{\em $k$-cycle}. A {\em Hamiltonian path} of a graph~$G$~is a spanning path in~$G$, and a Hamiltonian cycle of~$G$~is a spanning cycle in~$G$. A graph~$G$~is {\em Hamiltonian} if it contains a Hamiltonian cycle.

	To explore the possibility of embedding multiple cycles simultaneously, Kung and Chen~\cite{KC15}~extended these concepts.
	A {\em two-disjoint-cycle-cover} ($2$-DCC for short) of a graph $G$ is a pair of vertex-disjoint cycles $C_1$ and $C_2$ in $G$ satisfying $V(C_1)\cup V(C_2)=V(G)$.	A graph~$G$~is said to be {\em 2-DCC~$[r_1,r_2]$-pancyclic} if, for any positive integer~$\ell$~with~$r_1\le \ell\le r_2$,
	there exist a $2$-DCC~$C_1$~and~$C_2$~in~$G$~such that~$C_1$~has length~$\ell$~and~$C_2$~has length~$|V(G)|-\ell$.
	
	A graph~$G$~is said to be~{\em 2-DCC vertex~$[r_1,r_2]$-pancyclic} if, for any integer~$\ell$~with~$r_1\le \ell\le r_2$~and any two distinct vertices
	~$u,v\in V(G)$, there exist two vertex-disjoint cycles~$C_1$~and~$C_2$~in~$G$~such that one of the following holds:
	\medskip
	\begin{itemize}[label=\textbullet,leftmargin=*]
		\item~$C_1$~has length~$\ell$~and contains~$u$, while~$C_2$~has length~$|V(G)|-\ell$~and contains~$v$;
		\item~$C_1$~has length~$|V(G)|-\ell$~and contains~$u$, while~$C_2$~has length~$\ell$~and contains~$v$.
	\end{itemize}
	\medskip
	If~$G$~is~2-DCC vertex~$[r_1,r_2]$-pancyclic, then necessarily~$r_2\le |V(G)|/2$.
	
	Qiao, Meng and Sabir~\cite{QMS22}~investigated the~2-DCC vertex pancyclicity of augmented cubes.
	Niu, Zhou and Xu~\cite{NXL21}~studied the~2-DCC vertex bipancyclicity of bipartite hypercube-like networks.
	Related results for generalized hypercubes and balanced hypercubes have also been progressively developed~\cite{WHC20}. Li, Chen and Lu~\cite{Li25}~investigated the~2-DCC pancyclicity of~$S_n^2$. In this paper, we extend their work by showing that~$S_n^2$~is~2-DCC vertex~$[3,n!/2]$-pancyclic for any~$n\ge4$.

	\section{Preliminaries}
For any positive integers~$m$~and~$n$~with~$m<n$, the set of integers~$\{m,m+1,\ldots,n\}$~is denoted by~$[m,n]$. We use ~$[n]$ instead of~$[1,n]$. For any integer~$n\ge 3$, let~$S_n$~denote the set of all permutations of~$[n]$.

For two subgraphs~$G_1$~and~$G_2$~of~$G$, we use~$G_1\oplus G_2$~to denote the induced subgraph on the vertex set~$V(G_1)\cup V(G_2)$. The graphs~$G$~and~$H$~are {\em isomorphic}, written as~$G\cong H$. An isomorphism from~$G$~to itself is referred to as an {\em automorphism} of~$G$. For subsets $F_1,\ldots,F_s$ of $V(G)$, the notation $G-F_1-\cdots-F_s$ represents the induced subgraph $G[V(G)-(F_1\cup\cdots\cup F_s)]$. When $H$ is a subgraph of $G$, we use $G-H$ as a shorthand for $G-V(H)$.

The graph~$G$~is {\em vertex-transitive} if, for any two vertices~$u$~and~$v$, there exists an automorphism of~$G$~which maps~$u$~to~$v$. A graph~$G$~is said to be~{\em Hamiltonian-connected} if, for every pair of distinct vertices~$u,v\in V(G)$, there exists a Hamiltonian path starting at~$u$~and ending at~$v$.

	\begin{definition}
		Let~$n\in\mathbb{Z}^+$~with~$n\ge3$. The {\em~$n$-dimensional split-star network}~$S_n^2=(V(S_n^2),~E(S_n^2))$~is defined as follows:
		\[V(S_n^2) = S_n,\quad|V(S_n^2)|=n!.\]
		For two distinct vertices~$u=x_1x_2\dots x_n$~and~$v=y_1y_2\dots y_n$~in~$S_n^2$,~$(u,v)\in E(S_n^2)$~if one of the following holds:
		\begin{enumerate}[label=(\roman*)]
			\item~$y_1 = x_2$,~$y_2=x_1$,~$y_j=x_j$~for all~$j\in[3,n]$. Edge $(u,v)$ is called a~$(12)$-edge and the vertex~$v$~is also denoted by~$u\circ(1,2)$;
			\item~$y_1=x_2$,~$y_2 =x_i$,~$y_i=x_1$,~$y_j=x_j$~for all~$j\in[3,n]\setminus\{i\}$. Edge $(u,v)$ is called an~$i$-edge and the vertex~$v$~is also denoted by~$us_i^{-}$;
			\item~$y_1=x_i$,~$y_2 =x_1$,~$y_i=x_2$,~$y_j=x_j$~for all~$j\in[3,n]\setminus\{i\}$. Edge $(u,v)$ is called an~$i$-edge and the vertex~$v$~is also denoted by~$us_i^{+}$; 
		\end{enumerate}
	\end{definition}
\begin{figure}[h!]
	\centering
	\includegraphics[width=\textwidth]{tu.png}
	\makebox[0.35\textwidth][c]{$S_{4}^{2}$}
	\makebox[0.63\textwidth][c]{$S_{n}^{2}$}
	\caption{Diagrams of the split-star networks.}
	\label{fig:1}
\end{figure}

	\begin{lemma}\label{2}
		~(\cite{LLC21})The~$n$-dimensional split-star network~$S_n^2$~is Hamiltonian-connected for~$n\ge3$.
	\end{lemma}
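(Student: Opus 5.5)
The statement is Lemma~\ref{2}, quoted from \cite{LLC21}: $S_n^2$ is Hamiltonian-connected for every $n\ge 3$. I would prove it by induction on $n$, using the recursive structure of $S_n^2$.

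\textbf{Base case and decomposition.} For $n=3$, and for $n=4$ if the inductive step needs a larger base, I would check the claim directly on the explicit graphs. $S_3^2$ has $6$ vertices and $S_4^2$ has $24$, so a finite case analysis suffices, reduced by symmetry; it could also be done by computer. For $n\ge 5$, I partition $V(S_n^2)$ by the last coordinate: for $k\in[n]$ let $S_n^{2,k}$ be the subgraph induced by permutations ending in $k$. The $(12)$-edges and the $i$-edges with $i\le n-1$ fix position $n$, so each $S_n^{2,k}\cong S_{n-1}^2$. The $n$-edges join different copies. Each vertex $u=x_1\cdots x_n$ has exactly two outside neighbours, $us_n^-$ ending in $x_1$ and $us_n^+$ ending in $x_2$. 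Consequently, between any two copies $S_n^{2,a}$ and $S_n^{2,b}$ there are many cross edges, namely $2(n-2)!$, which is large for $n\ge 5$. By the induction hypothesis every copy is Hamiltonian-connected.

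\textbf{Inductive step.} Fix distinct $u,v$ and choose an ordering $k_1,\dots,k_n$ of the copies. Put $u$ in $S_n^{2,k_1}$ and, when possible, $v$ in $S_n^{2,k_n}$. In each copy $S_n^{2,k_j}$ choose an entry vertex $y_j$ and an exit vertex $x_j$, with $y_1=u$ and $x_n=v$. Each exit $x_j$ must have an $n$-edge to the next entry $y_{j+1}$, and the chosen vertices must satisfy $x_j\ne y_j$. Concatenating Hamiltonian paths $P_j[y_j,x_j]$ inside each copy, which exist by induction, with these cross edges gives a Hamiltonian $u$–$v$ path. Choosing the cross edges is easy because there are $2(n-2)!$ candidates between consecutive copies and only a few forbidden vertices per copy. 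The conflict $x_j=y_j$ is avoided by choosing from the many remaining candidates.

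\textbf{Main obstacle.} The hard case is when $u$ and $v$ lie in the same copy $S_n^{2,k}$. Then I would split that copy. Using the recursive structure of $S_{n-1}^2$, or a Hamiltonian $u$–$v$ path inside it, I pick an edge $(p,q)$ on the path $P[u,v]$ such that $p$ and $q$ have outside neighbours in two different copies. I then replace the edge $(p,q)$ by a detour: from $p$ into one copy, through all the other copies as in the inductive step, and back to $q$. Guaranteeing such an edge, one whose endpoints have suitable distinct outside neighbours (determined by their first two coordinates), is where I expect the real difficulty. I would choose it by controlling the first coordinate along the path. If that fails, I would fall back on the stronger inductive hypothesis used in \cite{LLC21}, for example Hamiltonian-connectedness after deleting a vertex.
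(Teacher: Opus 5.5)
The paper does not prove this lemma; it imports it from \cite{LLC21}. Your induction is nevertheless a correct, self-contained proof. It rests on the same device the paper uses in its own constructions (e.g.\ Lemmas~\ref{7} and~\ref{10}): split $S_n^2$ into the copies $S_n^{2:i}\cong S_{n-1}^2$ and chain Hamiltonian paths of the copies together through $s_n^{\pm}$-edges. Between any two copies these edges form a matching of size $2(n-2)!$.

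\textbf{The step you flagged is immediate.} You do not need to control the first coordinate along the path, and you do not need a stronger hypothesis. A vertex $p=p_1p_2\cdots p_{n-1}k$ has its two outside neighbours $ps_n^-\in V(S_n^{2:p_1})$ and $ps_n^+\in V(S_n^{2:p_2})$, with $p_1\neq p_2$, and the same holds for $q$. Hence \emph{every} edge $(p,q)$ of the Hamiltonian $u$--$v$ path of $S_n^{2:k}$ works:
\begin{itemize}
\item take $p'=ps_n^-$, which lies in $S_n^{2:p_1}$;
\item take $q'$ to be an outside neighbour of $q$ not lying in $S_n^{2:p_1}$, which exists because $q_1\neq q_2$.
\end{itemize}
A Hamiltonian $p'$--$q'$ path of $S_n^2-S_n^{2:k}$ then comes from your different-copies argument, applied to the remaining $n-1\ge 3$ copies ordered from $S_n^{2:p_1}$ to the copy containing $q'$.

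\textbf{You do not need $n=4$ as a separate base case.} For $n=4$ each matching between two copies already has $2\cdot 2!=4$ edges. At each step at most two of them are excluded: one by the requirement $x_j\neq y_j$, and one by requiring that the entry into the last copy differ from the target vertex. So the inductive step already works for $n=4$, and the induction can start at $n=3$. There $S_3^2$ is the triangular prism (two triangles of cyclic shifts joined by the $(12)$-matching), and its Hamiltonian-connectedness is checked by hand.
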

	
	\begin{lemma}\label{3}
    ~(\cite{Li25})For any edge~$(u,v)\in E(S_n^2)$~with~$n\ge3$,~$S_n^2$~has a Hamiltonian cycle containing~$(u,v)$.
	\end{lemma}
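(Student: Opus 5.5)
The plan is to derive this directly from Lemma~\ref{2}, so no new structural analysis of $S_n^2$ is needed. Fix an edge $(u,v)\in E(S_n^2)$ with $n\ge 3$. Since $u\neq v$ and $S_n^2$ is Hamiltonian-connected by Lemma~\ref{2}, there is a Hamiltonian path $P[u,v]=\langle u=w_1,w_2,\ldots,w_{n!}=v\rangle$ in $S_n^2$ whose ends are exactly $u$ and $v$.

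Next we close this path with the given edge. Consider $C=\langle w_1,w_2,\ldots,w_{n!},w_1\rangle$, i.e.\ $P[u,v]$ together with $(u,v)$. For $C$ to be a genuine cycle, the edge $(u,v)$ must not already lie on $P$. On a path, the only way the two ends can be consecutive is if the path has a single edge. Here $P$ has $n!-1\ge 5$ edges, so $u$ and $v$ are not consecutive on $P$ and $(u,v)\notin E(P)$. Hence $C$ is a cycle of length $n!\ge 6\ge 3$, it passes through every vertex of $S_n^2$, and it contains $(u,v)$ by construction.

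There is essentially no obstacle here. The only point needing a line of justification is the degenerate case in which adding $(u,v)$ would create a repeated edge rather than a cycle, which is excluded by $|V(S_n^2)|=n!\ge 6$. In particular, the argument uses nothing about the $(12)$-edges or $i$-edges beyond their being edges of a Hamiltonian-connected graph, so it applies uniformly to every edge type.
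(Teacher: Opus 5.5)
Your argument is correct. Lemma~\ref{2} gives a Hamiltonian $u$--$v$ path in $S_n^2$. Since $|V(S_n^2)|=n!\ge 6$, this path has at least $5$ edges, so it cannot contain the edge $(u,v)$ joining its two ends. Adding $(u,v)$ therefore closes it into a Hamiltonian cycle through that edge, and you handle this one point needing care explicitly.

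The paper gives no proof of this statement. It imports it from \cite{Li25} as a separate black box, alongside Lemma~\ref{2}, which it imports from \cite{LLC21}. Your route shows that the second citation is logically redundant: Lemma~\ref{3} is just the general fact that a Hamiltonian-connected graph on at least three vertices has every edge on a Hamiltonian cycle, specialized to $S_n^2$. It uses no structural information about $(12)$-edges or $i$-edges. Citing \cite{Li25} keeps the lemma independent of \cite{LLC21}. Your derivation gives a self-contained justification and reduces the paper's external dependencies here to Lemma~\ref{2} alone.
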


    \begin{lemma}\label{4}
    ~(\cite{CL00})The~$n$-dimensional split-star network~$S_n^2$~is vertex-transitive.
	\end{lemma}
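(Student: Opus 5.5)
The plan is to realize $S_n^2$ as a Cayley graph on the symmetric group. Once that is done, vertex-transitivity follows from the standard fact that left translations are automorphisms of a Cayley graph.

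First, I will identify each vertex $u=x_1x_2\cdots x_n\in S_n$ with the bijection $u\colon[n]\to[n]$ given by $u(j)=x_j$, that is, position $\mapsto$ symbol. For a permutation $\sigma$ of positions, $u\circ\sigma$ is the vertex whose $j$-th symbol is $x_{\sigma(j)}$. Reading off the three clauses of the definition of $S_n^2$, each neighbour of $u$ has the form $u\circ\sigma$ for some $\sigma$ in the set
\[
\Sigma=\{(1\,2)\}\cup\{(1\,2\,i),\ (1\,i\,2)\;:\; i\in[3,n]\}.
\]
\begin{itemize}[label=\textbullet,leftmargin=*]
\item A $(12)$-edge corresponds to the transposition $\sigma=(1\,2)$.
\item The edge to $us_i^-$ satisfies $y_1=x_2$, $y_2=x_i$, $y_i=x_1$. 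This is $\sigma\colon 1\mapsto 2,\ 2\mapsto i,\ i\mapsto 1$, i.e.\ the $3$-cycle $(1\,2\,i)$.
\item The edge to $us_i^+$ satisfies $y_1=x_i$, $y_2=x_1$, $y_i=x_2$. This is $\sigma=(1\,i\,2)=(1\,2\,i)^{-1}$.
\end{itemize}
Thus $\Sigma$ is closed under inverses and does not contain the identity. Hence $u\sim v$ if and only if $u^{-1}v\in\Sigma$, and $S_n^2$ is the Cayley graph $\mathrm{Cay}(S_n,\Sigma)$. This is consistent with the symmetric adjacency in the definition: $v=us_i^-$ exactly when $u=vs_i^+$.

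Second, for any $\tau\in S_n$ I will define $\phi_\tau(u)=\tau\circ u$. Concretely, this relabels the symbols: $x_1\cdots x_n\mapsto\tau(x_1)\cdots\tau(x_n)$. This map is a bijection of $V(S_n^2)$. It preserves adjacency because
\[
(\tau\circ u)^{-1}(\tau\circ v)=u^{-1}v,
\]
so $u^{-1}v\in\Sigma$ if and only if $(\tau u)^{-1}(\tau v)\in\Sigma$. Equivalently, all edge rules in the definition refer only to positions, never to symbol values, so renaming symbols preserves every edge type. Finally, given vertices $u,v$, I will take $\tau=v\circ u^{-1}$. Then $\phi_\tau(u)=v$, which proves that $S_n^2$ is vertex-transitive.

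The only step needing care is the bookkeeping of the position permutation $\sigma$ for the $s_i^{\pm}$ edges. I must check that the convention "$y_j=x_{\sigma(j)}$" is applied consistently, so that $\Sigma$ comes out inverse-closed. There is no real obstacle beyond this.
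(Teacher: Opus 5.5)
Your proof is correct. The position permutations for $us_i^-$ and $us_i^+$ are indeed $(1\,2\,i)$ and $(1\,i\,2)=(1\,2\,i)^{-1}$, so $S_n^2=\mathrm{Cay}(S_n,\Sigma)$ with $\Sigma$ inverse-closed, and the symbol relabelings $u\mapsto\tau\circ u$ are automorphisms acting transitively on the vertices.

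The paper gives no proof of its own here; it only cites Cheng and Lipman, and your Cayley-graph argument is the standard reason the cited fact holds. The same symbol-relabeling automorphism is also what the paper invokes later, when it swaps the symbols $1$ and $a_1$ in Case~2.1 of the proof of the main theorem.
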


	\begin{lemma}\label{5}
		~(\cite{Li25})~Let~$n\in\mathbb{Z}^+$~with~$n\ge3$. Then:
		\begin{enumerate}[label=(\roman*)]
			\item\label{5-1} For each~$u\in V(S_n^2)$,~$S_n^2-\{u\}$~has a Hamiltonian cycle;
			\item\label{5-2} For each~$(12)$-edge~$(u,v)$, $S_n^2-\{u,v\}$~has a Hamiltonian cycle;
			\item\label{5-3} For each~$i$-edge~$(u,v)$~with~$3\le i\le n$~and~$n\ge4$,~$S_n^2-\{u,v\}$~has a Hamiltonian cycle.
		\end{enumerate}
	\end{lemma}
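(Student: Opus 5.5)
The plan is induction on $n$, using the standard recursive structure of $S_n^2$. For $a\in[n]$, let $H_a$ be the subgraph induced by the permutations whose last symbol is $a$. Each $H_a\cong S_{n-1}^2$, since $(12)$-edges and $i$-edges with $i\le n-1$ preserve position $n$. The $n$-edges join different copies: for $u=x_1\cdots x_n\in H_{x_n}$, the neighbour $us_n^{-}$ lies in $H_{x_1}$ and $us_n^{+}$ lies in $H_{x_2}$, which are two distinct copies. For $a\ne b$, the cross edges between $H_a$ and $H_b$ are numerous, namely $2(n-2)!$ of them, and no vertex is an endpoint of two cross edges to the same copy. By Lemma~\ref{4} we may fix $u$ conveniently whenever this helps.

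\textbf{Base cases.} For $n=3$, $S_3^2$ has $6$ vertices and is $3$-regular. I would draw it explicitly and check (i) and (ii) by hand: a $5$-cycle avoiding any vertex, and a $4$-cycle avoiding the ends of any $(12)$-edge. For (iii) the base case is $n=4$, since the $i$-edges of $S_4^2$ with $i\in\{3,4\}$ need to be checked. I would reduce this by the symmetry of Lemma~\ref{4} and the symbol-relabelling automorphisms to a few representative edges. For each, I would exhibit a $22$-cycle in $S_4^2-\{u,v\}$ directly, using the four copies $H_a\cong S_3^2$.

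\textbf{Inductive step for (i).} Let $u\in H_a$. By induction, $H_a-u$ has a Hamiltonian cycle $C$. Order the other copies as $H_{b_1},\dots,H_{b_{n-1}}$.
\begin{itemize}
\item Choose an edge $(x,y)$ of $C$ such that $x$ has an $n$-neighbour $x'\in H_{b_1}$ and $y$ has an $n$-neighbour $y'\in H_{b_{n-1}}$. I would choose the ordering of the copies after picking the edge, so this only needs one edge of $C$ whose endpoints have $n$-neighbours in distinct copies other than $H_a$. This fails only if the two endpoints' first two symbols coincide as sets, which cannot happen for every edge of a long cycle.
\item In each $H_{b_j}$ choose an entry vertex $p_j$ and an exit vertex $q_j$ with $p_j\ne q_j$, $p_1=x'$, $q_{n-1}=y'$, and $q_j$ adjacent by an $n$-edge to $p_{j+1}$. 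This is possible because of the many cross edges between consecutive copies.
\item Lemma~\ref{2}, applied to $H_{b_j}\cong S_{n-1}^2$, gives a Hamiltonian path from $p_j$ to $q_j$.
\item The cycle $C-(x,y)$, the $n$-edges and these paths together form a Hamiltonian cycle of $S_n^2-u$.
\end{itemize}

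\textbf{Inductive step for (ii) and (iii).} Let $(u,v)$ be a $(12)$-edge, or an $i$-edge with $i\le n-1$. Then $u$ and $v$ lie in the same copy $H_a$. The induction hypothesis gives a Hamiltonian cycle of $H_a-\{u,v\}$, and I splice in the other copies exactly as above. If $(u,v)$ is an $n$-edge, then $u\in H_a$ and $v\in H_b$ with $a\ne b$. I apply (i) inside both copies, obtaining Hamiltonian cycles $C_a$ of $H_a-u$ and $C_b$ of $H_b-v$. I then open one edge in each, and route through the remaining $n-2$ copies, say $H_a\to\cdots\to H_b\to\cdots\to H_a$, using Hamiltonian paths from Lemma~\ref{2}.

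\textbf{Main obstacle.} The hard part is the bookkeeping of the splicing. One must guarantee that the opened cycle edges have endpoints whose $n$-neighbours land in the required copies and avoid $u$ and $v$. One must also ensure that each entry/exit pair inside a copy is distinct, and that in the $n$-edge case the two opened edges can be matched consistently to the chain. I would handle this by a counting argument: each $C$ has at least $(n-1)!-2$ edges, while only a bounded number of choices are forbidden. For small $n$, especially $n=4$ where the copies have only $6$ vertices, I expect to need an explicit check rather than counting.
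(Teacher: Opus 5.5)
Your proposal is correct, but it does not follow the paper, because the paper gives no proof of this lemma. It quotes the result from \cite{Li25} (the Lemma~2.8 of \cite{Li25} invoked later in the proof of Theorem~\ref{3-3}) and uses it as a black box.

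You instead give a self-contained induction over the decomposition into copies $H_a\cong S_{n-1}^2$, glued together by $n$-edges and by Hamiltonian paths from Lemma~\ref{2}. This is the same splicing mechanism the paper uses for its own Lemmas~\ref{11} and~\ref{3-2}. Your version buys independence from \cite{Li25}, at the price of base cases and some bookkeeping; the citation buys brevity.

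The base cases are harmless. $S_3^2$ is the triangular prism. For (iii) at $n=4$, the relabelling automorphisms $x_1\cdots x_n\mapsto\pi(x_1)\cdots\pi(x_n)$ leave only one $3$-edge and one $4$-edge to check. Your decision to start (iii) at $n=4$ is forced: deleting the ends of a $3$-edge of $S_3^2$ leaves a vertex of degree $1$. A Hamiltonian cycle for the $3$-edge case is drawn in Fig.~\ref{fig:99}(b). The $4$-edge case already yields to your $n$-edge splicing, with one intermediate copy on each side.

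Two small corrections to the bookkeeping.
\begin{itemize}
\item In step (i) the edge choice never fails. If $\{x_1,x_2\}=\{y_1,y_2\}=\{p,q\}$, simply send $x$ to $H_p$ and $y$ to $H_q$.
\item In the $n$-edge case the bad choices are not boundedly many. The freedom is still ample for $n\ge5$, since only $2(n-2)!-1$ of the $(n-1)!-1$ vertices of $C_a$ have $b$ among their first two symbols. You must, however, avoid opened edges whose four endpoints all have the same leading pair $\{p,q\}$. In that case each of the two chains would be forced to be a single copy, $H_p$ or $H_q$, and the remaining intermediate copies could not be threaded in.
\end{itemize}
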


		For each~$i\in[n]$, denote~$S_n^{2:i}$~as the subgraph of~$S_n^2$~induced by
		\[
		\{x_1x_2\dots x_{n-1}i\mid x_1,\dots,x_{n-1}\text{ are distinct in }[n]\setminus\{i\}\}.
		\]
		Let~$i_1,i_2,\dots,i_k\in[n]$~be distinct integers with~$k\ge2$. Define
		\[
		S_{n,k}^2=S_n^{2:i_1}\oplus S_n^{2:i_2}\oplus\dots\oplus S_n^{2:i_k},\quad V(	S_{n,k}^2)=\bigcup_{t=1}^{k}V(S_n^{2:i_t}).
		\]

	\begin{lemma}\label{6}
		~(\cite{Li25})~In~$S_n^2$~with~$n\ge4$, let~$u=x_1x_2\dots x_n\in V(S_n^2)$~and~$T\subseteq[n]\setminus\{x_1,x_2,x_n\}$~(where~$T$~may be empty). Then there exists a cycle~$C$~in~$S_n^2$~such that
		\[
		V(C)=\left(\bigcup_{t\in T}V(S_n^{2:t})\right)\cup V(S_n^{2:x_1})\cup V(S_n^{2:x_2})\cup\{u\}.
		\]
	\end{lemma}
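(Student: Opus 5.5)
The idea is to build $C$ as a concatenation of Hamiltonian paths through the subnetworks $S_n^{2:x_1}$, $S_n^{2:t_1},\dots,S_n^{2:t_k}$, $S_n^{2:x_2}$ (where $T=\{t_1,\dots,t_k\}$, possibly $k=0$). These paths are joined by $n$-edges, and the two ends of the chain are closed up through $u$.

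First we record two structural facts.

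(a) For each $i\in[n]$ the subgraph $S_n^{2:i}$ is isomorphic to $S_{n-1}^2$. The $(12)$-edges and the $j$-edges with $3\le j\le n-1$ fix the last symbol, and restricted to a copy they give exactly the edge rules of $S_{n-1}^2$ on the first $n-1$ positions. Since $n-1\ge 3$, Lemma~\ref{2} shows that each $S_n^{2:i}$ is Hamiltonian-connected.

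(b) The only edges between different copies are $n$-edges. Take $w=y_1\cdots y_n$ in $S_n^{2:a}$ (so $y_n=a$). Then $ws_n^{-}$ lies in $S_n^{2:y_1}$ and $ws_n^{+}$ lies in $S_n^{2:y_2}$. Hence, for $a\ne b$, the edges between $S_n^{2:a}$ and $S_n^{2:b}$ form a perfect matching between the $2(n-2)!$ vertices of $S_n^{2:a}$ having $b$ in position $1$ or $2$ and the corresponding $2(n-2)!$ vertices of $S_n^{2:b}$. In particular $u\in S_n^{2:x_n}$ has exactly two neighbours outside its copy:
\[
u^-=us_n^{-}=x_2x_n\cdots x_1\in S_n^{2:x_1},\qquad u^+=us_n^{+}=x_nx_1\cdots x_2\in S_n^{2:x_2}.
\]
Since $x_1,x_2\ne x_n$, these lie in two distinct copies, neither of which is $S_n^{2:x_n}$.

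Now list the copies in the order $A_0=S_n^{2:x_1}$, $A_1=S_n^{2:t_1},\dots,A_k=S_n^{2:t_k}$, $A_{k+1}=S_n^{2:x_2}$. For each $j\in[0,k]$ choose one cross edge $(b_j,a_{j+1})$ with $b_j\in A_j$ and $a_{j+1}\in A_{j+1}$. Also set $a_0=u^-$ and $b_{k+1}=u^+$.

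The requirement is that $a_j\ne b_j$ for every $j$. The chosen edges are automatically vertex-disjoint, because distinct $j$ give distinct pairs of copies. For interior copies $A_j$ with $1\le j\le k$, the vertex $a_j$ is fixed first, and then $b_j$ must be chosen among the $2(n-2)!\ge 4$ vertices of $A_j$ adjacent to $A_{j+1}$, avoiding $a_j$. This is always possible.

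The only delicate points are the two ends.

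In $A_0$ the entry vertex $u^-$ has first two symbols $x_2,x_n$. If $k\ge1$, then $t_1\notin\{x_2,x_n\}$, so $u^-$ is not adjacent to $A_1$, and any choice of $b_0$ differs from $u^-$.

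In $A_{k+1}$ we must have $a_{k+1}\ne u^+$. When $k\ge1$, we choose the edge into $A_{k+1}$ among the $2(n-2)!\ge4$ matching edges from $A_k$, also avoiding $b_k=a_k$; this excludes at most two edges. When $k=0$, we need an edge $(b_0,a_1)$ between $S_n^{2:x_1}$ and $S_n^{2:x_2}$ with $b_0\ne u^-$ and $a_1\ne u^+$. Because the edges form a matching, at most two of the $2(n-2)!\ge 4$ edges are forbidden, so such an edge exists.

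This endpoint bookkeeping, especially the case $T=\emptyset$ and the small case $n=4$ where each copy has only $6$ vertices, is the step that needs the most care. The counting above shows it goes through.

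Finally, by (a), take a Hamiltonian path $P_j[a_j,b_j]$ in each $A_j$. Then
\[
C=\langle u,\,P_0[a_0,b_0],\,P_1[a_1,b_1],\dots,P_{k+1}[a_{k+1},b_{k+1}],\,u\rangle
\]
is a cycle. Its vertex set is $V(A_0)\cup\cdots\cup V(A_{k+1})\cup\{u\}$, and $u$ is not in any $A_j$ since $x_n\notin T\cup\{x_1,x_2\}$. So $V(C)$ is exactly the required set.
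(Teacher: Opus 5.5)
Your proof is correct. Each $S_n^{2:i}\cong S_{n-1}^2$ is Hamiltonian-connected by Lemma~\ref{2}, consecutive copies are joined by a matching of $2(n-2)!\ge 4$ $n$-edges, and your endpoint bookkeeping goes through, including $T=\emptyset$, where in fact $u^-s_n^{-}=u^+$, so only one linking edge is forbidden. The paper does not prove this lemma itself (it quotes it from \cite{Li25}), but your construction is essentially the one it uses for Lemma~\ref{7} --- Hamiltonian paths in the copies, chained by $n$-edges and closed through the outside vertex --- except that you pick the linking edges by counting matching edges rather than naming explicit vertices such as $v_i=z_{i+1}\cdots z_i$ and $u_{i+1}=v_is_n^{-}$.
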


	\begin{lemma}\label{7}
		Let~$(u,v)\in E(S_n^{2:j})$~with~$n\ge5$~and~$j\in[n]$. Suppose that~$u=x_1x_2\dots x_{n-1}j$~and~$v=y_1y_2\dots y_{n-1}j$. If~$T\subseteq[n]\setminus\{x_1,x_2,j\}$ (where~$T$~may be empty), then there exists a cycle~$C$~in~$S_n^2$~such that
		\[
		V(C)=\left(\bigcup_{t\in T}V(S_n^{2:t})\right)\cup V(S_n^{2:x_1})\cup V(S_n^{2:x_2})\cup\{u,v\}
		\]
		and~$(u,v)\in E(C)$.
	\end{lemma}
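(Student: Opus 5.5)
The plan is to build $C$ as the edge $(u,v)$ followed by a path from $v$ back to $u$ through the prescribed blocks. That path leaves $S_n^{2:j}$ at one of $u,v$, runs through all the blocks $S_n^{2:x_1}$, $S_n^{2:t}$ ($t\in T$), $S_n^{2:x_2}$, and re-enters $S_n^{2:j}$ at the other of $u,v$. I will use three facts.
\begin{itemize}[label=\textbullet,leftmargin=*]
\item Each $S_n^{2:i}$ is isomorphic to $S_{n-1}^2$: fixing the last coordinate, the $(12)$-edges and the $i$-edges with $3\le i\le n-1$ stay inside the block. So Lemma~\ref{2} with $n-1\ge4$ makes each block Hamiltonian-connected.
\item The only edges leaving a block are $n$-edges. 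A vertex $w=z_1z_2\cdots z_n$ has exactly two outside neighbours: $ws_n^-$, whose last symbol is $z_1$, and $ws_n^+$, whose last symbol is $z_2$.
\item For distinct $a,b$, the $n$-edges between $S_n^{2:a}$ and $S_n^{2:b}$ form a matching of size $2(n-2)!\ge 12$. One half comes from vertices of $S_n^{2:a}$ with $z_1=b$, the other from those with $z_2=b$.
\end{itemize}

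\emph{Choosing the exits.} Since $u$ lies in $S_n^{2:j}$ and $x_1,x_2$ are its first two symbols, $x_1,x_2\neq j$. So $u$ has one outside neighbour in $S_n^{2:x_1}$ and one in $S_n^{2:x_2}$. I split on the type of the edge $(u,v)$ inside $S_n^{2:j}$.
\begin{itemize}[label=\textbullet,leftmargin=*]
\item If $(u,v)$ is a $(12)$-edge, then $v=x_2x_1\cdots$. Send $u$ to $u'=us_n^-\in S_n^{2:x_1}$ and $v$ to $v'=vs_n^-\in S_n^{2:x_2}$.
\item If $(u,v)$ is an $i$-edge with $3\le i\le n-1$ and $v=us_i^-=x_2x_i\cdots$, then $v$ has an outside neighbour $v'$ in $S_n^{2:x_2}$. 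Pair it with $u'=us_n^-\in S_n^{2:x_1}$.
\item If instead $v=us_i^+=x_ix_1\cdots$, then $v$ has an outside neighbour $v'$ in $S_n^{2:x_1}$. Pair it with $u'=us_n^+\in S_n^{2:x_2}$.
\end{itemize}
In every case $u$ and $v$ exit into the two distinct blocks $S_n^{2:x_1}$ and $S_n^{2:x_2}$, one each. By symmetry, assume $u'\in S_n^{2:x_1}$ and $v'\in S_n^{2:x_2}$.

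\emph{Chaining the blocks.} Write $T=\{t_1,\dots,t_k\}$, with $k=0$ allowed, and order the blocks as
\[
B_0=S_n^{2:x_1},\ B_1=S_n^{2:t_1},\ \dots,\ B_k=S_n^{2:t_k},\ B_{k+1}=S_n^{2:x_2}.
\]
For each $m\in[0,k]$, pick an $n$-edge $(b_m,a_{m+1})$ with $b_m\in B_m$ and $a_{m+1}\in B_{m+1}$. Set $a_0=u'$ and $b_{k+1}=v'$, and require $a_m\ne b_m$ for every $m\in[0,k+1]$. Hamiltonian-connectedness of each block then gives Hamiltonian paths $P_m[a_m,b_m]$ in $B_m$. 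The cycle
\[
\langle u,\,u'=a_0,\ P_0,\ b_0,a_1,\ P_1,\ \dots,\ P_{k+1},\ b_{k+1}=v',\,v,\,u\rangle
\]
has exactly the required vertex set and uses $(u,v)$. The blocks are pairwise distinct because $T\cap\{x_1,x_2,j\}=\emptyset$.

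\emph{Main obstacle.} The only real difficulty is the bookkeeping that guarantees $a_m\neq b_m$ in each block. This covers the end blocks, where $a_0=u'$ and $b_{k+1}=v'$ are forced, and the case $k=0$, where a single edge between $S_n^{2:x_1}$ and $S_n^{2:x_2}$ must avoid both $u'$ and $v'$. Choose the connecting edges greedily from $m=0$ upward. At each step at most two endpoints are forbidden, namely the already chosen $a_m$ and, in the last step, $v'$. The matching between consecutive blocks has at least $12$ edges and each forbidden vertex kills at most one of them, so a valid choice always exists.
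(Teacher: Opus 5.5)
Your proof is correct and follows essentially the same route as the paper. Both send $u$ and $v$ out along $n$-edges into $S_n^{2:x_1}$ and $S_n^{2:x_2}$, chain the blocks indexed by $T$ between them by $n$-edges, and fill each block with a Hamiltonian path from Lemma~\ref{2}; the paper treats only the $us_k^{+}$ case directly (citing \cite{Li25} for the other edge types) and leaves the requirement $a_m\neq b_m$ implicit, whereas your uniform case split and your matching-count argument handle all edge types and this endpoint bookkeeping explicitly.
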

	\begin{proof}
If~$v=u\circ(1,2)$~or~$v=u s_k^{-}$~with~$k\in[3,n-1]$, from \cite[Lemma~2.6]{Li25}, then the desired result follows. Hence, we only need to consider the case~$v=u s_k^{+}$.
	Assume that~$T\neq\varnothing$.
	Renumber the integers in~$T$, and write them as~$\{z_1,z_2,\ldots,z_t\}$.
	Denote~$x_2=z_0$~and~$x_1=z_{t+1}$.
	Since~$v=u s_k^{+}$, we have~$y_2=x_1$.
	Let~$u_0=u s_n^{+}$~and~$v_{t+1}=v s_n^{+}$.
	For~$0\le i\le t$, choose~$v_i=z_{i+1}\cdots z_i\in S_{n}^{2:z_i}$~and~$u_{i+1}=v_i s_n^{-}\in S_{n}^{2:z_{i+1}}$.
	According to Lemma \ref{2},~$S_{n}^{2:z_i}$~has a Hamiltonian path~$P_i=[u_i,v_i]$.
	Then
	\[
	  C:=\langle u,u_0,P_0[u_0,v_0],v_0,u_1,P_1[u_1,v_1],v_1,\ldots,P_{t+1}[u_{t+1},v_{t+1}],v_{t+1},v,u\rangle
	\]
	is the required cycle, which is illustrated in Figure \ref{fig:2}.
	
	When~$T=\varnothing$, the required cycle can be obtained by a similar argument.
  \begin{figure}[h!]
  \centering
  \includegraphics[width=\textwidth]{lemma2_7.png}
        \makebox[0.17\textwidth][c]{$S_{n}^{2:j}$}
  \makebox[0.2\textwidth][c]{$S_{n}^{2:z_0}$}
  \makebox[0.2\textwidth][c]{$S_{n}^{2:z_1}$}
  \makebox[0.22\textwidth][c]{$S_{n}^{2:z_{t}}$}
  \makebox[0.16\textwidth][c]{$S_{n}^{2:z_{t+1}}$}
  \caption{Illustration of Lemma \ref{7}.}
  \label{fig:2}
\end{figure}
\end{proof}

	
	\begin{lemma}\label{8}
      		Let~$e=(u,v)$~be an edge in~$S_n^{2:i}$~for some~$i\in[n]$. There exist~$u'\in\{us_n^+,us_n^-\}$~and~$v'\in \{vs_n^+,vs_n^-\}$~such that~$e'=(u',v')\in E(S_n^{2:j})$~for some~$j\in[n]\setminus\{i\}$, which is reffered to as a coupled pair-edge of~$e$. Moreover,~$\langle u,u',v',v,u\rangle$~is a~$4$-cycle.
	\end{lemma}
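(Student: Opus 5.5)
The statement is purely local, so I will prove it by direct computation with the definitions of $s_n^{+}$ and $s_n^{-}$. Write $u=x_1x_2\cdots x_{n-1}i$. Since $e=(u,v)$ lies inside $S_n^{2:i}$, both ends have last coordinate $i$. Hence $e$ is either a $(12)$-edge or a $k$-edge with $k\in[3,n-1]$. An $n$-edge is excluded because it changes the last coordinate.

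From the definition, the two $n$-neighbours of $u$ are
\[us_n^{+}=i\,x_1\,x_3\cdots x_{n-1}\,x_2 \in S_n^{2:x_2},\qquad us_n^{-}=x_2\,i\,x_3\cdots x_{n-1}\,x_1 \in S_n^{2:x_1}.\]
The task in each case is to pick one $n$-neighbour of $u$ and one of $v$ that have the same last coordinate $j$ (automatically $j\neq i$) and are adjacent.

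\emph{Case 1: $v=u\circ(1,2)=x_2x_1x_3\cdots x_{n-1}i$.} Take $u'=us_n^{+}=i\,x_1\,x_3\cdots x_{n-1}\,x_2$ and $v'=vs_n^{-}=x_1\,i\,x_3\cdots x_{n-1}\,x_2$. These differ only by swapping the first two symbols, so $(u',v')$ is a $(12)$-edge of $S_n^{2:x_2}$.

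\emph{Case 2: $v=us_k^{-}$ with $k\in[3,n-1]$.} Here $v=x_2\,x_k\,x_3\cdots x_1\cdots x_{n-1}\,i$, with $x_1$ in position $k$. Take $u'=us_n^{+}=i\,x_1\,x_3\cdots x_k\cdots x_2$ and $v'=vs_n^{-}=x_k\,i\,x_3\cdots x_1\cdots x_2$, where in both the displayed middle entry sits in position $k$. Applying $s_k^{+}$ to $u'$ puts $x_k$ in position $1$, $i$ in position $2$ and $x_1$ in position $k$, and fixes the rest. So $v'=u's_k^{+}$, and $(u',v')$ is a $k$-edge of $S_n^{2:x_2}$.

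\emph{Case 3: $v=us_k^{+}$.} Since $s_k^{+}$ and $s_k^{-}$ are mutually inverse, we have $u=vs_k^{-}$. Case 2 then applies with the roles of $u$ and $v$ exchanged, which gives the pair.

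In every case the vertices $u,v$ lie in $S_n^{2:i}$ and $u',v'$ lie in $S_n^{2:j}$ with $j\neq i$. Also $u\neq v$ and $u'\neq v'$, so the four vertices are distinct. The edges $(u,u')$ and $(v,v')$ are $n$-edges, and $(u,v)$, $(u',v')$ are edges. Hence $\langle u,u',v',v,u\rangle$ is a $4$-cycle. The only real work is the index bookkeeping in Case 2, namely checking that $u's_k^{+}$ equals $vs_n^{-}$ exactly. The remaining cases are either immediate or follow by this symmetry.
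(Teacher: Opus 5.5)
Your proof is correct and follows the paper's argument. Both do a direct case check over $(12)$-edges and $k$-edges with $k\in[3,n-1]$, and both make the same choices of $u'$ and $v'$; your Case 2 is the paper's Case 3. The only differences are minor. You get the $us_k^{+}$ case from the inverse relation $u=vs_k^{-}$ instead of recomputing it. The paper also records the second choice $u'=us_n^{-}$, $v'=vs_n^{+}$ for a $(12)$-edge, which is what later gives Corollary~\ref{9}(i) in both $S_n^{2:u_1}$ and $S_n^{2:u_2}$.
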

	
	\begin{proof}
	Let~$u=u_1u_2\cdots u_{n-1}i$, and~$v$~be a neighbor of~$u$~in~$S_n^{2:i}$.
	Without loss of generality, write~$v=v_1v_2\cdots v_{n-1}i$.
	
	\medskip
	\noindent\textbf{Case 1.}~$v=u\circ(1,2)$.

    Note that~$v_1=u_2$~and~$v_2=u_1$.
	Let\[u' = u s_n^{+} =iu_1\cdots u_{n-1}u_2~{\rm and}~v' = v s_n^{-} = u_1i\cdots u_{n-1}u_2\]
	or\[u' = u s_n^{-} = u_2i\cdots u_{n-1}u_1~{\rm and}~v' = v s_n^{+} =iu_2\cdots u_{n-1}u_1.
	\]
	Then the edge~$e=(u,v)$~has a coupled pair-edge~$e'=(u',v')$~in both~$S_n^{2:u_1}$~and~$S_n^{2:u_2}$,
	and~$\langle u,u',v',v,u\rangle$~is a~$4$-cycle in~$S_n^2$.
	
	\medskip
	\noindent\textbf{Case 2.}~$v=us_k^{+}=u_k u_1\cdots i$~for some~$3\le k\le n-1$.
	
	Let
	\[
	u' = u s_n^{-} = u_2i\cdots u_1~{\rm and}~v' = vs_n^{+} =iu_k\cdots u_1.
	\]
	Since~$v'=u's_k^{-}$,~$e=(u,v)$~has a coupled pair-edge~$e'=(u',v')$~in~$S_n^{2:u_1}$, which implies that~$\langle u,u',v',v,u\rangle$~is a~$4$-cycle in~$S_n^2$.
	
	\medskip
	\noindent\textbf{Case 3.}~$v=us_k^{-}=u_2u_k\cdots i$~for some~$3\le k\le n-1$.
	
	Let
	\[
	u' = u s_n^{+} = iu_1\cdots u_2~{\rm and}~v' = vs_n^{-} =u_ki\cdots u_2.
	\]
	Since~$v'=u's_k^{+}$,~$e=(u,v)$~has a coupled pair-edge~$e'=(u',v')$~in~$S_n^{2:u_2}$, which implies that~$\langle u,u',v',v,u\rangle$~is a~$4$-cycle in~$S_n^2$.
	\end{proof}

	\begin{cor}\label{9}
		In~$S_n^2$~with~$n\ge4$, let~$C$~be a cycle in~$S_n^{2:i}$~for some~$i\in[n]$.
		For any edge~$e=uv\in E(C)$, where~$u=u_1u_2\cdots u_{n-1}i$, the following statements hold.
		\begin{itemize}
			\item[(i)] If~$v=u\circ(1,2)$, then~$e$~has a coupled pair-edge in~$S_n^{2:u_1}$~and~$S_n^{2:u_2}$.
			
			\item[(ii)] If~$v=us_k^{-}$,~$3\le k\le n-1$, then~$e$~has a coupled pair-edge in~$S_n^{2:u_2}$.
			
			\item[(iii)] If~$v=us_k^{+}$,~$3\le k\le n-1$, then~$e$~has a coupled pair-edge in~$S_n^{2:u_1}$.
		\end{itemize}
	\end{cor}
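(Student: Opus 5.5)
The corollary is a restatement of the case analysis in the proof of Lemma~\ref{8}, organized by edge type. The plan is to classify the edge $e=uv$ of $C$ and read off, for each type, the sub-network $S_n^{2:j}$ that contains the coupled pair-edge constructed there. The cycle $C$ matters only in that it places $e$ inside $S_n^{2:i}$.

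First I would check which edge types can occur. Since $C\subseteq S_n^{2:i}$, both ends of $e$ have last coordinate $i$. An $n$-edge (via $s_n^{\pm}$) moves $x_1$ or $x_2$ into position $n$, and since $x_1,x_2\neq x_n$ it changes the last coordinate, so no edge of $S_n^{2:i}$ is an $n$-edge. Hence $v$ is one of $u\circ(1,2)$, $us_k^{-}$ or $us_k^{+}$ with $3\le k\le n-1$. These are exactly the three situations (i)--(iii), and they are the three cases in the proof of Lemma~\ref{8}.

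Next I would verify each case using the explicit vertices from Lemma~\ref{8}, writing $u=u_1u_2\cdots u_{n-1}i$.

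In case (i), $v=u_2u_1u_3\cdots u_{n-1}i$. Take $u'=us_n^{+}=iu_1u_3\cdots u_{n-1}u_2$ and $v'=vs_n^{-}=u_1iu_3\cdots u_{n-1}u_2$. Then $v'=u'\circ(1,2)$, both vertices lie in $S_n^{2:u_2}$, and they are joined by a $(12)$-edge. Symmetrically, $u'=us_n^{-}$ and $v'=vs_n^{+}$ give a $(12)$-edge in $S_n^{2:u_1}$.

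In case (iii), $v=us_k^{+}$ has $u_k$ in position $1$, $u_1$ in position $2$ and $u_2$ in position $k$. Then $u'=us_n^{-}=u_2iu_3\cdots u_1$ and $v'=vs_n^{+}=iu_k\cdots u_1$ both end in $u_1$. Applying $s_k^{-}$ to $u'$ puts $i$ in position $1$, $u_k$ in position $2$ and $u_2$ in position $k$, which is exactly $v'$. So $(u',v')$ is a $k$-edge of $S_n^{2:u_1}$.

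In case (ii), $v=us_k^{-}$ and the roles of $u_1$ and $u_2$ are swapped. This yields $u'=us_n^{+}$ and $v'=vs_n^{-}$ with $v'=u's_k^{+}$, both ending in $u_2$, so the coupled pair-edge lies in $S_n^{2:u_2}$.

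In every case, Lemma~\ref{8} already shows that $\langle u,u',v',v,u\rangle$ is a $4$-cycle, so each edge found above is a coupled pair-edge in the sense of that lemma. The only step requiring care is the coordinate bookkeeping in cases (ii) and (iii): one must confirm that the correct choice of $s_n^{+}$ versus $s_n^{-}$ sends the ends to the same block. I expect this to be routine rather than a genuine obstacle.
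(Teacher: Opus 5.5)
Your proposal is correct and takes the same route as the paper, which gives no separate proof and reads the corollary off the three cases of Lemma~\ref{8}: Case~1 gives (i), Case~3 ($v=us_k^{-}$, $v'=u's_k^{+}$) gives (ii) in $S_n^{2:u_2}$, and Case~2 ($v=us_k^{+}$, $v'=u's_k^{-}$) gives (iii) in $S_n^{2:u_1}$. Your coordinate checks agree with these, and your remark that no $n$-edge lies inside $S_n^{2:i}$ makes explicit why the three cases are exhaustive.
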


	\begin{lemma}\label{10}
        Let~$(u,v)\in E(S_n^2)$~with~$n\ge4$~and~$W_q=\{qw_2w_3\cdots w_n\mid w_2,\ldots,w_n\in[n]\setminus\{q\}\}$. 
        For any~$q\in[n]$, there exist two edges~$(w_q^k,z_q^k)$,~$k\in\{1,2\}$, such that for each~$k$, there exists a Hamiltonian cycle of~$S_n^2$~containing both~$(u,v)$~and~$(w_q^k,z_q^k)$, where~$w_q^k\in W_q$~and~$z_q^k=w_q^ks_n^{+}$.

	\end{lemma}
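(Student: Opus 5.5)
Set $u=x_1x_2\cdots x_n$. The plan is to construct, for each $q$, an explicit Hamiltonian cycle through $(u,v)$ that also uses an $n$-edge of the form $(w,ws_n^{+})$ with $w\in W_q$, and to do this twice so that two different such edges arise. By Lemma \ref{4}, and since $S_n^2$ is edge-transitive within each edge type under relabelling symbols, we may normalize $u$, though we must keep $q$ arbitrary. We work with the decomposition into the $n$ copies $S_n^{2:i}$ for $i\in[n]$, each isomorphic to $S_{n-1}^2$ and joined by $n$-edges. For a vertex $w=qw_2\cdots w_n\in W_q$, the neighbour $ws_n^{+}=w_nq w_3\cdots w_{n-1}w_2$ lies in $S_n^{2:w_2}$. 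So $(w,ws_n^{+})$ is an edge from $S_n^{2:w_n}$ to $S_n^{2:w_2}$ whose first endpoint begins with $q$. Consequently, when passing between two copies $S_n^{2:a}$ and $S_n^{2:b}$ with $a\ne b$, we are free to use the crossing edge from $w=q\,b\cdots a$ to $ws_n^{+}$, provided $q\notin\{a,b\}$. If instead $q=b$, the crossing edge $w=b\cdots a$ is of the form $(w,ws_n^{+})$ seen from the other endpoint, so the orientation needs care.

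The cycle is built in the style of the proof of Lemma \ref{7}. \textbf{Case A:} $(u,v)$ lies inside a copy $S_n^{2:j}$, i.e.\ it is not an $n$-edge. Choose a cyclic ordering $j=c_1,c_2,\dots,c_n$ of $[n]$. In $S_n^{2:c_1}$, take a Hamiltonian path that contains $(u,v)$ and has prescribed ends. This comes from Lemma \ref{3} applied to $S_{n-1}^2$: delete a suitable edge of a Hamiltonian cycle through $(u,v)$, or alternatively use Lemma \ref{2} after contracting. In every other copy, take Hamiltonian paths between the prescribed entry and exit vertices, which exist by Lemma \ref{2} as each copy is Hamiltonian-connected. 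Consecutive copies are linked by $n$-edges. Because between $S_n^{2:a}$ and $S_n^{2:b}$ there are $(n-2)!\ge 2$ linking edges, with many choices of the remaining coordinates, we can pick the link between some consecutive pair $c_t,c_{t+1}$ to be of the form $(w,ws_n^{+})$ with $w\in W_q$. We choose the ordering so that $q$ is different from that pair, and then select two different such $w$ (varying $w_3\cdots w_{n-1}$) to get $k=1,2$. The remaining linking vertices must be chosen distinct from $w$, from $ws_n^{+}$, and from each other inside each copy, so that the Hamiltonian-connectedness applies. With $(n-2)!$ choices this is easy for $n\ge5$; the case $n=4$, where $(n-2)!=2$, has to be checked by hand.

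\textbf{Case B:} $(u,v)$ is an $n$-edge joining $S_n^{2:x_n}$ to another copy. Use it as one of the links in the cyclic ordering and proceed as in Case A. Choose another link, not adjacent to $(u,v)$ in the ordering if possible, to be the required $W_q$-edge.

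The main obstacle is the endpoint constraints inside the first copy in Case A. There we need a Hamiltonian path of $S_n^{2:j}$ through $(u,v)$ whose two ends are the prescribed linking vertices, yet Lemma \ref{3} only gives a Hamiltonian cycle through an edge. I would try to handle this by choosing the exit links adaptively: take a Hamiltonian cycle $H$ of $S_n^{2:j}$ containing $(u,v)$, remove an edge $(a,b)\ne(u,v)$ of $H$, and then let the outgoing $n$-neighbours of $a$ and $b$ dictate the neighbouring copies in the ordering. Note that $a$ and $b$ have $n$-neighbours in different copies, or we can choose $(a,b)$ so that they do, which is possible since $H$ is long. The other small obstacle is $n=4$, where I expect to verify directly on $S_4^2$, using Figure \ref{fig:1} and vertex-transitivity.
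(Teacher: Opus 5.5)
Your plan is the paper's construction. You take a Hamiltonian cycle of the copy $S_n^{2:j}$ through $(u,v)$ (Lemma~\ref{3} applied to $S_{n-1}^2$) and break it at a second edge whose ends leave by $n$-edges into two different copies. The remaining copies are threaded by Hamiltonian paths (Lemma~\ref{2}), with one link being a $W_q$-edge. When $(u,v)$ is an $n$-edge it serves as a link; the paper's Case~2 takes $u=u_0$, $v=u_0s_n^-$. For $n\ge5$ your counting goes through (the number of links between two copies is $2(n-2)!$, not $(n-2)!$):
an edge $(w,ws_n^+)$ with $w=qw_2\cdots w_n$ joins $S_n^{2:w_n}$ and $S_n^{2:w_2}$;
there are $2(n-3)!\ge4$ such edges between any two copies other than $S_n^{2:q}$;
at least one of the $n-2\ge3$ internal links of the chain avoids $q$;
the other links come from matchings of size $2(n-2)!$, so entry $\neq$ exit in every copy is easy to arrange.

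The genuine gap is $n=4$, which you postpone, and there the mechanisms you describe fail. Your remark on $q=b$ does not help. Since $w_2,w_n\neq q$, no edge $(w,ws_n^+)$ with $w\in W_q$ meets $S_n^{2:q}$ at all. So in Case~A with $q\neq j$ and $q=c_3$, both internal links of the chain $c_2,c_3,c_4$ touch $q$, while $c_2,c_4$ are forced by the exits of $a,b$. You must therefore break $H$ at an edge $(a,b)$ with $q\in\{a_1,a_2\}$ and route $a$ into $S_4^{2:q}$; this is the paper's ``we may assume $q=a_1$''.

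Next, ``varying $w_3$'' produces only one edge. Between copies $c,c'\neq q$ with fourth symbol $x$, the only candidates are $(q\,c'\,x\,c,\;c\,q\,x\,c')$ and $(q\,c\,x\,c',\;c'\,q\,x\,c)$, one per direction. These are the two edges you must use (the paper's $w_q^1=qa_3ia_2$ and $w_q^2=qa_2ia_3$).

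Finally, entry $\neq$ exit has to be verified for each of the two choices rather than assumed. With the paper's literal choices for $q\neq i$, $k=2$, the entry $u_3^2=qa_2ia_3$ of $S_4^{2:a_3}$ coincides with its exit $v_3$. That link must be re-picked, e.g.\ leave $S_4^{2:a_3}$ at $a_2a_1ia_3$ towards $a_3a_2ia_1$.
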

	\begin{proof}
\textbf{Case 1.}~$u,v\in V(S_n^{2:i})$, where $i\in[n]$.

By Lemma \ref{3},~$(u,v)$~lies on a Hamiltonian cycle~$C_1$~of~$S_n^2$. There exists an edge~$(u_0,v_0)\in E(C_1)$~such that~$u_0\in\{a_1a_2\cdots a_{n-1}i\mid a_j\in [n]\setminus\{i\},~1\leq j\leq n-1\}$ and $(u_0,v_0)\neq(u,v)$. Regardless of whether~$(u_0,v_0)$~is a~$(1,2)$-edge or an~$i$-edge, the vertex~$v_0$~has a neighbor in either~$S_n^{2:a_1}$~or~$S_n^{2:a_2}$,
while~$u_0$~has a neighbor in the other one of these two subgraphs. Consequently,~$u_0$~and~$v_0$~can always be connected to these two subgraphs, respectively. In particular, in the construction below, the specific choice of such a connecting edge is arbitrary, as long as it joins two different subgraphs of~$S_n^2$.

\textbf{Case 1.1.}~$n=4$.

Note that~$u_0=a_1a_2a_3i$. Since the proofs are similar, we may assume~$v_0=u_0\circ(1,2)=a_2a_1a_3i$. The vertex~$v_0$ has a neighbor~$u_2=v_0s_4^{-}=a_1ia_3a_2$~in~$S_4^{2:a_2}$~and~$u_0$~has a neighbor~$v_1=u_0s_4^{-}=a_2ia_3a_1$~in~$S_4^{2:a_1}$.

Suppose~$q\neq i$. Since~$a_1\in [n]\setminus\{i\}$~was arbitrary, we may assume~$q=a_1$. Now we choose two edges~$(w_q^1,z_q^1)$~and~$(w_q^2,z_q^2)$~such that
\[
w_q^{1}=q a_3 i a_2=v_2^1,~z_q^{1}=w_q^{1}s_4^{+}=a_2 q i a_3=u_3^1,
\]
\[
w_q^{2}=q a_2 i a_3=u_3^2,~z_q^{2}=w_q^{2}s_4^{+}=a_3 q i a_2=v_2^2.
\]
Let~$u_1=a_2a_3iq,v_3=qa_2ia_3$. By Lemma \ref{2},~$S_{4}^{2:a_j}$~has a Hamiltonian path~$P_j[u_j,v_j]$, where~$j\in\{1,2,3\}$. Let
	\[
	  C_k:=\langle u_0,P_0[u_0,v_0],v_0,u_2,P_2[u_2,v_2^k],v_2^k,u_3^k,P_3[u_3^k,v_3],v_3,u_1,P_1[u_1,v_1],v_1,u_0\rangle
	\]for~$k\in\{1,2\}$. Since~$(u,v)\in P_0[u_0,v_0]$ and $(w_q^k,z_q^k)\in E(C_k)$, $C_1$ and $C_2$ are the required two cycles, which are illustrated in Figure \ref{fig:3}.
  \begin{figure}[h!]
  \centering
  \includegraphics[width=\textwidth]{Lemma2_10-1.png}
        \makebox[0.21\textwidth][c]{$S_{n}^{2:i}$}
  \makebox[0.27\textwidth][c]{$S_{n}^{2:a_1}$}
  \makebox[0.27\textwidth][c]{$S_{n}^{2:a_3}$}
  \makebox[0.21\textwidth][c]{$S_{n}^{2:a_2}$}
  \caption{Illustration of Lemma \ref{10}.}
  \label{fig:3}
\end{figure}

Suppose~$q=i$. we choose two edges~$(w_i^1,z_i^1)$~and~$(w_i^2,z_i^2)$~such that
\[
w_i^{1}=i a_3 a_1 a_2=v_2^1,~z_i^{1}=w_i^{1}s_4^{+}=a_2 i a_1 a_3=u_3^1,
\]
\[
w_i^{2}=i a_2 a_1 a_3=u_3^2,~z_i^{2}=w_i^{2}s_4^{+}=a_3 i a_1 a_2=v_2^2.
\]
Let
	\[
	  C_k:=\langle u_0,P_0[u_0,v_0],v_0,u_2,P_2[u_2,v_2^k],v_2^k,u_3^k,P_3[u_3^k,v_3],v_3,u_1,P_1[u_1,v_1],v_1,u_0\rangle
	\]for~$k\in\{1,2\}$. Since~$(u,v)\in P_0[u_0,v_0]$ and $(w_q^k,z_q^k)\in E(C_k)$, $C_1$ and $C_2$ are the required two cycles.

\textbf{Case 1.2.}~$n\ge 5$.

Note that~$u_0=a_1 a_2 \cdots a_{n-1}i$. Since the proofs are similar, we may assume~$v_0=u_0\circ(1,2)=a_2 a_1 \cdots a_{n-1} i$. The vertex~$v_0$ has a neighbor~$u_2=v_0s_n^{-}=a_1ia_3 \cdots a_{n-1}a_2$ in $S_n^{2:a_2}$ and $u_0$~has a neighbor~$v_1=u_0s_n^{-}=a_2ia_3 \cdots a_{n-1}a_1$~in~$S_n^{2:a_1}$.

Suppose~$q\neq i$. Since~$a_1\in [n]\setminus\{i\}$~was arbitrary, we may assume~$q=a_1$. Now we choose two edges~$(w_q^1,z_q^1)$~and~$(w_q^2,z_q^2)$~such that
\[
w_q^{1}=a_1 a_{n-2} \cdots a_3 a_2 i a_{n-1}=u_{n-1}^{1},~z_q^{1}=w_q^{1}s_n^{+}=a_{n-1} a_1 \cdots a_{n-2}=v_{n-2}^{1},
\]
\[
w_q^{2}=a_1 a_{n-2} \cdots a_3 i a_2 a_{n-1}=u_{n-1}^{2},~z_q^{2}=w_q^{2}s_n^{+}=a_{n-1} a_1 \cdots a_{n-2}=v_{n-2}^{2}.
\]
Let~$u_1=a_{n-1}\cdots a_1$~and~$v_{n-1}=u_1s_n^{-}\in V(S_{n}^{2:a_{n-1}})$. For~$2\le t\le n-3$, choose~$v_t=a_{t+1}\cdots a_t\in V(S_{n}^{2:a_t})$~and~$u_{t+1}=v_ts_n^{-}\in V(S_{n}^{2:a_{t+1}})$.
By Lemma \ref{2},~$S_{n}^{2:a_j}$~has a Hamiltonian path~$P_j[u_j,v_j]$, where~$j\in\{1,2,\ldots,n-1\}$.
Let
\[
\begin{aligned}
C_k:= \langle\; &
u_0,P_0[u_0,v_0],v_0,u_2,P_2[u_2,v_2],v_2,\ldots,u_{n-2},P_{n-2}[u_{n-2},v_{n-2}^k],\\
&v_{n-2}^k,u_{n-1}^k,P_{n-1}[u_{n-1}^k,v_{n-1}],v_{n-1},u_1,P_1[u_1,v_1],v_1,u_0\rangle
\end{aligned}
\]for~$k\in\{1,2\}$. Since~$(u,v)\in P_0[u_0,v_0]$ and $(w_q^k,z_q^k)\in E(C_k)$,~$C_1$ and $C_2$ are the required two cycles, which is illustrated in Figure \ref{fig:4}.
  \begin{figure}[h!]
  \centering
  \includegraphics[width=\textwidth]{Lemma2_10-2.png}
      \makebox[0.19\textwidth][c]{$S_{n}^{2:i}$}
  \makebox[0.19\textwidth][c]{$S_{n}^{2:a_1}$}
  \makebox[0.2\textwidth][c]{$S_{n}^{2:a_{n-1}}$}
  \makebox[0.19\textwidth][c]{$S_{n}^{2:a_{n-2}}$}
  \makebox[0.19\textwidth][c]{$S_{n}^{2:a_2}$}
  \caption{Illustration of Lemma \ref{10}.}
  \label{fig:4}
\end{figure}

Suppose~$q=i$. we choose two edges~$(w_i^1,z_i^1)$~and~$(w_i^2,z_i^2)$~such that
\[
w_i^{1}=i a_{n-2} \cdots  a_1 a_2 a_{n-1}=u_{n-1}^{1},~z_q^{1}=w_q^{1}s_n^{+}=a_{n-1} i \cdots a_{n-2}=v_{n-2}^{1},
\]
\[
w_i^{2}=i a_{n-2} \cdots  a_2 a_1 a_{n-1}=u_{n-1}^{2},~z_q^{2}=w_q^{2}s_n^{+}=a_{n-1} i  \cdots a_{n-2}=v_{n-2}^{2}.
\]
Let
\[
\begin{aligned}
C_k:= \langle\; &
u_0,P_0[u_0,v_0],v_0,u_2,P_2[u_2,v_2],v_2,\ldots,u_{n-2},P_{n-2}[u_{n-2},v_{n-2}^k],\\
&v_{n-2}^k,u_{n-1}^k,P_{n-1}[u_{n-1}^k,v_{n-1}],v_{n-1},u_1,P_1[u_1,v_1],v_1,u_0\rangle
\end{aligned}
\]for~$k\in\{1,2\}$. Since~$(u,v)\in P_0[u_0,v_0]$ and $(w_i^k,z_i^k)\in E(C_k)$,~$C_1$ and $C_2$ are the required two cycles.

\textbf{Case 2.}~$u \in V(S_n^{2:i})$ and $v\notin V(S_n^{2:i})$, where $i \in [n]$.

Since $u$ and $v$ lie in different subnetworks of $S_n^2$, the vertex $v$ is either $us_n^{-}$ or $us_n^{+}$.
If $v = us_n^{-}$, by treating vertex $u$ as $u_0$ in Case~1, then $v=v_1$, since $v_1=u_0s_n^{-}$ in Case~1. By the construction of $C_k$ in Case~1, for any integer $q \in [n]$, there exist two edges $(w_q^k, z_q^k)$ with $k \in \{1,2\}$ such that for each $k$, there exists a Hamiltonian cycle of $S_n^2$ containing both $(u_0, v_1)$ and $(w_q^k, z_q^k)$. So, there exists a Hamiltonian cycle of $S_n^2$ containing both $(u,v)$ and $(w_q^k, z_q^k)$ in this case.

If $v = u s_n^{+}$, then equivalently $u = v s_n^{-}$. Similar, treat vertex $v$ as $u_0$, then $u=v_1$. There exists a Hamiltonian cycle of $S_n^2$ containing both $(v,u)$ and $(w_q^k, z_q^k)$.

Consequently, the two edges $(u,v)$ and $(w_q^k, z_q^k)$ for each $k \in \{1,2\}$ lie on a Hamiltonian cycle $C_k$ of $S_n^2$. Therefore,~$C_1$ and $C_2$ are the required two cycles.
    \end{proof}

	\begin{lemma}\label{11}
		Let~$n\ge5$~and
		\[
		S_{n,k}^2=S_n^{2:i_1}\oplus S_n^{2:i_2}\oplus\dots\oplus S_n^{2:i_k},\quad i_1,\dots,i_k\in[n],\;1\le k\le n.
		\]
		If $1\le t\le k$, then each edge of~$S_n^{2:i_t}$~is contained in a Hamiltonian cycle of~$S_{n,k}^2$.
	\end{lemma}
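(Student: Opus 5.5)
The plan is to build, for a given edge $e=(u,v)\in E(S_n^{2:i_t})$, a Hamiltonian cycle of $S_{n,k}^2$ by chaining Hamiltonian paths of the blocks $S_n^{2:i_s}$. Consecutive blocks will be linked by $n$-edges ($s_n^{\pm}$), which are exactly the edges between distinct blocks. The case $k=1$ is immediate: $S_n^{2:i_1}\cong S_{n-1}^2$ with $n-1\ge4$, so Lemma \ref{3} gives a Hamiltonian cycle of $S_n^{2:i_1}$ through $e$. For $k=n$ we have $S_{n,n}^2=S_n^2$, which is again Lemma \ref{3}. So we assume $2\le k\le n-1$.

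The key tool is the coupled pair-edge of Lemma \ref{8} and Corollary \ref{9}. Write $u=u_1u_2\cdots u_{n-1}i_t$. The edge $e$ has a coupled pair-edge $e'=(u',v')$ inside $S_n^{2:j}$, where $j\in\{u_1,u_2\}$ is determined by the type of $e$.

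\textbf{Step 1.} Suppose $j\in\{i_1,\dots,i_k\}$. Take a Hamiltonian cycle $C'$ of $S_n^{2:j}$ through $e'$ (Lemma \ref{3}, valid since $n-1\ge4$), and a Hamiltonian path $P$ of $S_n^{2:i_t}$ from $u$ to $v$ (Lemma \ref{2}). Replacing $e'$ in $C'$ by $u',u,P,v,v'$ gives a Hamiltonian cycle of $S_n^{2:j}\oplus S_n^{2:i_t}$. This cycle avoids $e$, so the construction will need refining. The better approach is to let $e$ itself stay on the cycle and leave $S_n^{2:i_t}$ through some other edge $(a,b)$ of a Hamiltonian cycle of $S_n^{2:i_t}$ containing $e$. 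The edge $(a,b)$ is chosen with $a,b$ having $n$-neighbours in two different blocks $S_n^{2:p}$ and $S_n^{2:q}$, $p,q\in\{i_1,\dots,i_k\}\setminus\{i_t\}$. This is the same mechanism as in the proof of Lemma \ref{10}.

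\textbf{Step 2.} Order the remaining blocks $z_0=p,z_1,\dots,z_m=q$. Between consecutive blocks $z_s$ and $z_{s+1}$, pick an exit vertex $v_s=z_{s+1}\cdots z_s\in S_n^{2:z_s}$ and set $u_{s+1}=v_s s_n^-\in S_n^{2:z_{s+1}}$, exactly as in Lemmas \ref{7} and \ref{10}. Each block $S_n^{2:z_s}$ is then traversed by a Hamiltonian path from $u_s$ to $v_s$ (Lemma \ref{2}). Closing up via $a$ and $b$ yields the desired Hamiltonian cycle through $e$.

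\textbf{Main obstacle.} The difficulty is choosing $(a,b)$ on a Hamiltonian cycle of $S_n^{2:i_t}$ through $e$, with $(a,b)\ne e$, such that the two $n$-neighbours of $a$ and $b$ lie in prescribed blocks of our index set, and such that entry and exit vertices within each block are distinct. For $k=2$ we need $p=q$ and both $n$-neighbours in the same block, which is the coupled pair-edge situation. I would first try to handle this by a counting argument. A Hamiltonian cycle of $S_n^{2:i_t}$ has $(n-1)!$ edges, while each fixed block $S_n^{2:p}$ receives exactly $(n-2)!$ $n$-edges from $S_n^{2:i_t}$. The aim is to show that some cycle edge other than $e$ has a coupled pair-edge in a chosen $S_n^{2:p}$. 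If needed, vertex-transitivity (Lemma \ref{4}) lets us normalize $e$ first, and if that fails we fall back to a case split on the type of $e$ ($(12)$-edge, $s_k^+$, $s_k^-$) as in Lemma \ref{8}. For $k\ge3$ I expect the main work to be verifying that the endpoints $u_s\ne v_s$ inside each block, which holds because there are enough free choices of the middle coordinates when $n\ge5$.
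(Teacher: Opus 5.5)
\textbf{There is a genuine gap.} The cases $k=1$ and $k=n$ are correctly handled by Lemma \ref{3}. For $2\le k\le n-1$, however, everything rests on the step you yourself call the main obstacle, and that step is never proved. You need a Hamiltonian cycle $H$ of $S_n^{2:i_t}$ through $e$ that also contains an edge $(a,b)\neq e$ whose $n$-neighbours lie in blocks of the index set.

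The counting idea cannot supply this. Lemma \ref{3} gives you an arbitrary Hamiltonian cycle through $e$, with no control over its other edges. The vertices of $S_n^{2:i_t}$ with an $n$-neighbour in $S_n^{2:p}$ are those with $p$ in position $1$ or $2$. There are $2(n-2)!$ of them (not $(n-2)!$), only a $2/(n-1)$ fraction of the block. Moreover, a vertex $a$ with $a_1=p$ can be entered and left along $s_i^{-}$-edges, whose other ends have first two symbols $\{a_2,a_i\}\not\ni p$. So nothing forces $H$ to contain an edge with both ends exiting into $S_n^{2:p}$, and normalizing $e$ by an automorphism does not change that. For $k\ge 3$ your linear chain asks for even more: exits into two \emph{different} prescribed blocks $p\neq q$.

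The missing ingredient is Lemma \ref{10}, applied inside $S_n^{2:i_t}\cong S_{n-1}^2$; this is exactly where $n\ge 5$ is needed. Take any symbol $q\neq i_t$ of the index set. Lemma \ref{10} then gives a Hamiltonian cycle of $S_n^{2:i_t}$ containing $e$ and an edge $(w,ws_{n-1}^{+})$ with $w_1=q$. Since two such edges are offered, you can take this edge different from $e$. By Corollary \ref{9}(iii) it has a coupled pair-edge $(w',z')$ in $S_n^{2:q}$.

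Both ends therefore exit into the \emph{same} block. The other $n$-neighbours of $w$ and $ws_{n-1}^{+}$ lie in the uncontrolled blocks $S_n^{2:w_2}$ and $S_n^{2:w_{n-1}}$. So a linear chain of Hamiltonian paths with $p\neq q$ is the wrong shape. Instead, nest:
\begin{enumerate}
\item apply Lemma \ref{10} again in $S_n^{2:q}$ to $(w',z')$ with the next index, and continue block by block;
\item finish with Lemma \ref{3} in the last block;
\item splice consecutive cycles along the $4$-cycles $\langle w,w',z',z,w\rangle$.
\end{enumerate}
This is the paper's proof. Equivalently, induct on $k$: splice into $H$ a Hamiltonian cycle of the remaining $k-1$ blocks through $(w',z')$.
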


\begin{proof}
Since the proofs are similar, we may assume $t=1$. Let $(u,v)\in E(S_{n}^{2:i_1})$.
By Lemma \ref{10}, $S_{n}^{2:i_1}$ contains a Hamiltonian cycle
\[
C_1:=\langle v,P^1[v,z_{i_2}],z_{i_2},w_{i_2},P^1[w_{i_2},u],u,v\rangle.
\] Corollary \ref{9} implies that the edge $(w_{i_2},z_{i_2})$ has a coupled pair-edge
$(w'_{i_2},z'_{i_2})$ in $S_{n}^{2:i_2}$ and $\langle w_{i_2}, w'_{i_2}, z'_{i_2}, z_{i_2}, w_{i_2}\rangle$ is a~$4$-cycle.

By Lemma \ref{10}, $S_{n}^{2:i_2}$ contains a Hamiltonian cycle
\[
C_2:=\langle z_{i_2}^{'},P^2[z_{i_2}^{'},z_{i_3}],z_{i_3},w_{i_3},P^2[w_{i_3},w_{i_2}^{'}],w_{i_2}^{'},z_{i_2}^{'}\rangle.
\] In view of Corollary \ref{9}, the edge $(w_{i_3},z_{i_3})$ has a coupled pair-edge
$(w'_{i_3},z'_{i_3})$ in $S_{n}^{2:i_3}$, and $\langle w_{i_3}, w'_{i_3}, z'_{i_3}, z_{i_3}, w_{i_3}\rangle$ is a~$4$-cycle.

Repeating this process, there is a Hamiltonian cycle\[
C_{k-1}:=\langle z_{i_{k-1}}^{'},P^{k-1}[z_{i_{k-1}}^{'},z_{i_k}],z_{i_k},w_{i_k},P^{k-1}[w_{i_k},w_{i_{k-1}}^{'}],w_{i_{k-1}}^{'},z_{i_{k-1}}^{'}\rangle.
\] Corollary \ref{9} implies that the edge $(w_{i_k},z_{i_k})$ has a coupled pair-edge $(w'_{i_k},z'_{i_k})$ in $S_{n}^{2:i_k}$, and $\langle w_{i_{k-1}}, w'_{i_{k-1}}, z'_{i_{k-1}}, z_{i_{k-1}}, w_{i_{k-1}}\rangle$ is a~$4$-cycle.

According to Lemma \ref{3}, there exists a Hamiltonian cycle\[
C_k:=\langle w_{i_{k}}^{'},z_{i_{k}}^{'},P^{k-1}[z_{i_{k}}^{'},w_{i_{k}}^{'}],w_{i_{k}}^{'} \rangle
\] in $S_{n}^{2:i_k}$. Hence,
\[
\begin{aligned}
  C:=\langle\; &
   u,v,P^1[v,z_{i_2}],z_{i_2},z_{i_2}^{'},P^2[z_{i_2}^{'},z_{i_3}],z_{i_3},\ldots,z_{i_{k-1}}^{'},P^{k-1}[z_{i_{k-1}}^{'},z_{i_k}],z_{i_k},z_{i_k}^{'},\\
   &P^{k}[z_{i_k}^{'},w_{i_k}^{'}],w_{i_k}^{'},w_{i_k},P^{k-1}[w_{i_k},w_{i_{k-1}}^{'}],w_{i_{k-1}}^{'},\ldots,w_{i_3},P^{2}[w_{i_3},w_{i_2}^{'}],\\
   &w_{i_2}^{'},w_{i_2},P^{1}[w_{i_2},u],u \rangle
  \end{aligned}
\] is a Hamiltonian cycle of $S_{n,k}^2$ containing the edge $(u,v)$, which is illustrated in the Figure \ref{fig:5}. Since $(u,v)\in E(S_{n}^{2:i_1})$ was arbitrary, each edge in $S_{n}^{2:i_t}$ lies on a Hamiltonian cycle of $S_{n,k}^2$.
  \begin{figure}[h!]
  \centering
  \includegraphics[width=\textwidth]{Lemma2_11.png}
      \makebox[0.22\textwidth][c]{$S_{n}^{2:i_1}$}
  \makebox[0.24\textwidth][c]{$S_{n}^{2:i_2}$}
  \makebox[0.24\textwidth][c]{$S_{n}^{2:i_{k-1}}$}
  \makebox[0.22\textwidth][c]{$S_{n}^{2:i_k}$}
  \caption{Illustration of Lemma \ref{11}.}
  \label{fig:5}
  \end{figure}
\end{proof}

\begin{lemma}\label{12}
Let $C_1$ and $C_2$ be a two-disjoint-cycle-cover in $S_{n}^{2}$ with $n\ge 4$.
Then there exist two vertices
\[
a_1a_2\cdots a_n\in V(C_1) \quad \text{and} \quad
b_1b_2\cdots b_n\in V(C_2)
\]
such that $a_1,a_2,b_1,b_2$ are pairwise distinct.
\end{lemma}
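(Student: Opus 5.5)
We argue by contradiction and work only with the unordered ``head pairs'' of vertices. For a vertex $x=x_1x_2\cdots x_n$ put $h(x)=\{x_1,x_2\}$, a $2$-subset of $[n]$. Let $\mathcal A=\{h(a)\mid a\in V(C_1)\}$ and $\mathcal B=\{h(b)\mid b\in V(C_2)\}$. Suppose the statement fails. Then every pair in $\mathcal A$ meets every pair in $\mathcal B$.

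Since $V(C_1)\cup V(C_2)=V(S_n^2)=S_n$, and every $2$-subset of $[n]$ is the head pair of some permutation, we get $\mathcal A\cup\mathcal B=\binom{[n]}{2}$. Both families are nonempty, because cycles are nonempty.

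The key observation is the following. If $P\in\mathcal A$ and $Q\in\mathcal B$, then $P\cap Q\neq\varnothing$. Hence any pair $Q$ disjoint from some $P\in\mathcal A$ lies in $\mathcal A$ and not in $\mathcal B$, and symmetrically for $\mathcal B$. Consequently, for each component of the Kneser graph $K(n,2)$ (vertices the $2$-subsets of $[n]$, adjacency meaning disjointness), all of its pairs lie in $\mathcal A\setminus\mathcal B$ or all lie in $\mathcal B\setminus\mathcal A$. Here we use that $K(n,2)$ has no isolated vertices for $n\ge4$: a pair in both families would have a disjoint partner lying in both families, which is impossible.

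\textbf{Case $n\ge5$.} $K(n,2)$ is connected. This is a standard fact, and can be checked directly: $\{1,2\}$ and $\{1,3\}$ are joined via $\{4,5\}$. So one of $\mathcal A,\mathcal B$ is empty, a contradiction.

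\textbf{Case $n=4$.} This is the main obstacle, since $K(4,2)$ is the perfect matching
\[
\{\{1,2\},\{3,4\}\},\quad \{\{1,3\},\{2,4\}\},\quad \{\{1,4\},\{2,3\}\}.
\]
Here $\mathcal A$ and $\mathcal B$ partition these three components, so one cycle, say $C_2$, has all its vertices in the class $V_\kappa=\{x\mid h(x)\in\kappa\}$ of a single component $\kappa=\{\{p,q\},\{r,s\}\}$. Note $|V_\kappa|=8$.

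We now examine the edges inside $V_\kappa$.
\begin{itemize}
\item A $(12)$-edge preserves the head pair.
\item An $i$-edge from $x_1x_2x_3x_4$ produces head pair $\{x_2,x_i\}$ or $\{x_1,x_i\}$ with $i\ge3$. This pair contains one element of $\{x_1,x_2\}$ and one of $\{x_3,x_4\}$. For $h(x)\in\kappa$, such a pair meets both $\{p,q\}$ and $\{r,s\}$ in exactly one element, so it is not in $\kappa$.
\end{itemize}
Therefore $S_4^2[V_\kappa]$ consists only of $(12)$-edges, i.e.\ it is a perfect matching, and it contains no cycle. This contradicts $C_2$ being a cycle inside $V_\kappa$.

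Hence in all cases there exist $a\in V(C_1)$ and $b\in V(C_2)$ with $h(a)\cap h(b)=\varnothing$. That is, $a_1,a_2,b_1,b_2$ are pairwise distinct.
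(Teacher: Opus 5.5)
Your proof is correct and follows essentially the same route as the paper. Both arguments use head-pair families that are forced to be disjoint, with disjoint pairs propagating to the same family (your Kneser-graph components); for $n\ge5$ the pair $\{4,5\}$ yields a contradiction, and for $n=4$ one cycle would be confined to the vertices whose head pairs form a complementary pair $\{\{p,q\},\{r,s\}\}$, which contain no cycle. Your explicit check that these eight vertices induce only the $(12)$-edge perfect matching makes rigorous a step the paper only asserts (``vertices $12\cdots$ and $34\cdots$ are not adjacent'').
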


\begin{proof} Define
\[
\mathscr{A}=\{\{a_1,a_2\}\mid a_1a_2\cdots a_n\in V(C_1)\},\quad\mathscr{B}=\{\{b_1,b_2\}\mid b_1b_2\cdots b_n\in V(C_2)\}.
\] Note that $\mathscr{A}\cup\mathscr{B}=\{\{x_i,x_j\}\mid x_i,x_j\in[n]\}$. It suffices to show that there exist pairwise distinct elements $a_1,a_2,b_1,b_2$ such that $\{a_1,a_2\}\in\mathscr{A}$ and $\{b_1,b_2\}\in \mathscr{B}$. Assume the contrary, namely,  $\{a_1,a_2\},\{a_3,a_4\}\in\mathscr{C}$ for all pairwise distinct elements $a_1,a_2,a_3,a_4\in [n]$ and some $\mathscr{C}\in\{\mathscr{A},\mathscr{B}\}$.

Suppose $\mathscr{A}\cap\mathscr{B}\neq\emptyset$. Without loss of generality, we may assume $\{1,2\}\in\mathscr{A}\cap\mathscr{B}$. Since $A\cup B=\{\{x_i,x_j\}\mid x_i,x_j\in[n]\}$, we have $\{1,2\}\in\mathscr{A}$ and $\{3,4\}\in\mathscr{B}$, or $\{3,4\}\in\mathscr{A}$ and $\{1,2\}\in\mathscr{B}$, a contradiction. Then $\mathscr{A}\cap\mathscr{B}=\emptyset$.


\medskip
\noindent
\textbf{Case 1.} $n=4$.

Without loss of generality, we may assume $\{1,2\},\{3,4\}\in \mathscr{A}$.
Since vertices $12\cdots$ and $34\cdots$ are not adjacent, $C_1$ must contain other vertices of the form $a_1a_2\cdots$, where $\{a_1,a_2\}\notin\{\{1,2\},\{3,4\}\}$. It follows that $\{1,i\},\{2,j\}\in\mathscr{A}$, where $\{i,j\}=\{3,4\}$. Since $\mathscr{B}\neq\emptyset$,~$\{1,j\},\{2,i\}$ must belong to $\mathscr{B}$. Since vertices $1j\cdots$ and $2i\cdots$ are not adjacent, we obtain $\{1,i\},\{2,j\}\in \mathscr{B}$, contrary to the fact that $\mathscr{A}\cap \mathscr{B}\neq\emptyset$.

\medskip
\noindent
\textbf{Case 2.} $n\ge 5$.

Without loss of generality, we may assume $\{1,2\}\in\mathscr{A}$. Since $\{1,2\},\{a,b\}\in\mathscr{A}$ for all distinct elements $a,b\in [3,n]$, there exists $i\in\{1,2\}$ such that $\{i,c\}\in\mathscr{B}$ for some $c\in[3,n]$. Without loss of generality, we may assume $c=3$. Since $n\ge 5$, we have $\{4,5\}\in \mathscr{A}\cup \mathscr{B}$. It follows that $\{1,2\},\{4,5\}\in\mathscr{A}$ or $\{i,3\},\{4,5\}\in\mathscr{B}$, a contradiction.
\end{proof}

	\section{2-DCC vertex pancyclicity of~$S_n^2$}
	
	\begin{lemma}\label{3-1}
		The graph~$S_4^2$~is 2-DCC vertex~$[3,12]$-pancyclic.
	\end{lemma}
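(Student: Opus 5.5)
We need to show: for any distinct $u,v\in V(S_4^2)$ and any $\ell\in[3,12]$, there are vertex-disjoint cycles of lengths $\ell$ and $24-\ell$ containing $u$ and $v$ separately. The plan is a finite, structured case analysis using the decomposition of $S_4^2$ into the four copies $S_4^{2:1},\dots,S_4^{2:4}$. Each copy has $6$ vertices, and by the definition of the edges each copy is a $6$-cycle: it is the Cayley graph on $S_3$ generated by $(12),(13)$ and the $3$-cycle, which is $K_{3,3}$ minus a perfect matching. Copies are joined only by the $s_4^{\pm}$ edges.

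By Lemma~\ref{4} we may fix $u=1234$, so that $u\in S_4^{2:4}$. We then split according to the position of $v$.
\begin{itemize}
\item[(a)] $v$ lies in the same copy $S_4^{2:4}$ as $u$.
\item[(b)] $v$ lies in a different copy, and there are three such copies.
\end{itemize}
We can reduce case (b) further with the automorphisms of $S_4^2$ fixing $u$. Renaming the symbols $1\leftrightarrow 2$, and permuting the symbols $3,4$ appropriately, gives automorphisms that permute the copies. So only a handful of representative positions of $v$ need to be handled.

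For each representative pair we build the two cycles from three ingredients.
\begin{itemize}
\item[(1)] Short cycles inside a single copy: the $6$-cycle itself, and the $4$-cycles $\langle u,u',v',v,u\rangle$ of Lemma~\ref{8} that use coupled pair-edges between two copies.
\item[(2)] Cycles that run through a union of whole copies plus one extra vertex, given by Lemma~\ref{6}, of lengths $13$ and $19$.
\item[(3)] Hamiltonian cycles of unions of copies. Their complements are obtained from Lemma~\ref{5}(ii),(iii) for lengths $2,22$ and from Lemma~\ref{5}(i) for lengths $1,23$.
\end{itemize}

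The general recipe for a given $\ell$ is as follows. Take a cycle $C_1$ of length $\ell$ through $u$, built inside one or two copies: a $3$-cycle or $4$-cycle via a coupled pair, or a cycle of length $6+j$ obtained by merging the $6$-cycle with part of a neighbouring copy through a $4$-cycle of Lemma~\ref{8}. Then check that the remaining graph $S_4^2-C_1$ has a Hamiltonian cycle through $v$. Note that $S_4^2$ itself has triangles, for instance $u$, $us_3^+$, $us_3^-$ and similar, so $\ell=3$ is realizable.

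For $\ell\le 6$ the complement contains at least three whole copies. We would certify its Hamiltonicity by gluing 6-cycles of untouched copies along coupled pair-edges. This uses the standard move of deleting matching edges of two $4$-cycle-linked cycles and adding the cross edges. The leftover path in the partially used copy is absorbed the same way.

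For $7\le\ell\le 12$ we instead take $C_1$ to consist of copy $S_4^{2:4}$ together with a path segment of a second copy, merged through a coupled pair. The complement is then two whole copies plus a path of the remaining vertices of the second copy, merged similarly.

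We expect the main obstacle to be the requirement that $v$ lie on the correct cycle while $u$ lies on the other. This matters most in case (a) when $v$ is adjacent or close to $u$ within the same $6$-cycle, and most of all for $\ell=12$, where both cycles must cut the copies evenly. Here we would first try putting $u$ and $v$ in complementary arcs of the $6$-cycle of $S_4^{2:4}$. We would extend each arc via its end vertices' $s_4^{\pm}$ neighbours into different outer copies, which is possible because the two endpoints of an edge leave toward different copies, as noted in the proof of Lemma~\ref{10}. We would then balance the lengths by choosing how many vertices of each outer $6$-cycle are taken. Any residual configurations would be handled by explicit cycles written out, in the spirit of Figure~\ref{fig:1}.
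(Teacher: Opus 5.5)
There is a genuine gap. The proposal is an outline rather than a proof, and several of its concrete claims are false. First, the copies are not $6$-cycles. Inside $S_4^{2:i}$ each vertex $x$ has exactly the three neighbours $x\circ(1,2)$, $xs_3^{+}$, $xs_3^{-}$, so $S_4^{2:i}$ is a triangular prism: the triangles $\{x,xs_3^{+},xs_3^{-}\}$ joined by the $(12)$-matching. $K_{3,3}$ minus a perfect matching is bipartite, which contradicts your own triangle $u,us_3^{+},us_3^{-}$, since that triangle lies in $S_4^{2:4}$.

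Second, your recipe for $7\le\ell\le 12$ cannot give $\ell=7$. That recipe takes all of $S_4^{2:4}$ plus a segment of a second copy attached through a coupled pair. But a vertex $x=x_1x_2x_3j$ has its two external neighbours $xs_4^{-}\in V(S_4^{2:x_1})$ and $xs_4^{+}\in V(S_4^{2:x_2})$ in different copies. So any excursion out of a copy and back uses at least two outside vertices, which forces $|C_1|\ge 8$.

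Third, and more seriously, the recipe collapses whenever $v\in V(S_4^{2:4})$:
\begin{itemize}
\item For $\ell\ge 7$ it puts $v$ on $C_1$.
\item For $\ell=6$ no cycle through $u$ avoiding $v$ fits inside the copy, so your claim that for $\ell\le 6$ the complement contains three whole copies is false.
\item For $\ell=3$ with $v\in\{us_3^{+},us_3^{-}\}$, the only triangle through $u$ in its copy contains $v$. You then need the triangle $\langle u,us_4^{+},us_4^{-},u\rangle=\langle 1234,4132,2431,1234\rangle$, which runs through three copies. Your ingredients only allow cycles ``inside one or two copies''.
\end{itemize}

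You flag these same-copy configurations as the main obstacle, but you defer them to ``explicit cycles written out'' that are never given. The Hamiltonicity of every complement is likewise only asserted via gluing. Lemmas~\ref{7} and~\ref{11}, which perform such gluing, are stated only for $n\ge 5$, so for $n=4$ each complement must be checked directly. Because the lemma is a finite statement, these verifications \emph{are} the proof. The paper fixes $u=1234$ and, for every $\ell\in[3,12]$, lists explicit $2$-DCCs (Tables~\ref{3-11}, \ref{3-12}, \ref{3-13}):
\begin{itemize}
\item one for $v=2134$;
\item one serving all $v\in\{3124,1324,3214,2314\}$ at once;
\item one serving all $v\in V(S_4^{2:2})$ at once.
\end{itemize}
It handles $S_4^{2:1}$ and $S_4^{2:3}$ analogously.

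Your symmetry reduction is also overstated. Combining the relabelling $3\leftrightarrow 4$ with the position swap needed to fix $u$ sends each copy $S_4^{2:i}$ to a set with fixed third coordinate, not to another copy. Only the $1\leftrightarrow 2$ symmetry, which exchanges $S_4^{2:1}$ and $S_4^{2:2}$, respects your case split, so $v\in V(S_4^{2:3})$ remains a separate case. To turn the outline into a proof you must actually exhibit, or exhaustively verify, the cycles in all these cases.
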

    \begin{proof}
      Let $u,v$ be two distinct vertices in $S_4^2$, and $\ell$ be an integer with $3 \le \ell \le 12$. We will construct two disjoint cycles $C_1$ of length $\ell$ and $C_2$ of length $24 - \ell$ such that $u \in V(C_1)$ and $v \in V(C_2)$. By the vertex-transitivity of $S_4^2$, we can assume $u = 1234$, which implies that $u\in S_4^{2:4}$.

        Assume $v \in V(S_4^{2:4})$. When $v = 2134$, for each $\ell \in [3,12]$, we can find two desired cycles as detailed in Table \ref{3-11} of Appendix. These cycles form a 2-DCC containing $u$ and $v$ respectively. If $v \in \{3124,1324,3214,2314\}$, the desired cycles can be constructed as listed in Table \ref{3-12} of Appendix, ensuring $u \in V(C_1)$ and $v \in V(C_2)$.

Assume $v \notin V(S_4^{2:4})$. If $v \in V(S_4^{2:2})$, the required cycles are given in Table \ref{3-13} of Appendix, ensuring $u \in V(C_1)$ and $v \in V(C_2)$. If $v \in V(S_4^{2:1})$ or $v \in V(S_4^{2:3})$, analogous constructions yield the desired 2-DCC containing $u$ and $v$, respectively.
    \end{proof}

	\begin{lemma}\label{3-2}
		Let $n\ge4$. If~$u\in V(S_n^2)$ and $v\in\{u\circ(1,2),us_3^{+}\}$, then each edge of~$S_n^2-\{u,v\}$~is contained in a Hamiltonian cycle of~$S_n^2-\{u,v\}$.
	\end{lemma}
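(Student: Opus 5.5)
By Lemma \ref{4} we may fix $u=x_1x_2\cdots x_n$ freely and take $u\in V(S_n^{2:x_n})$. Both choices of $v$ in the statement, $u\circ(1,2)$ and $us_3^{+}$, fix the last coordinate, so $v\in V(S_n^{2:x_n})$ as well. The plan is to build the Hamiltonian cycle of $S_n^2-\{u,v\}$ blockwise. The block $S_n^{2:x_n}$, with $u,v$ removed, carries a Hamiltonian cycle by Lemma \ref{5}\ref{5-2} or \ref{5-3} applied one dimension lower, since $S_n^{2:x_n}\cong S_{n-1}^2$. The remaining blocks $S_n^{2:j}$, $j\neq x_n$, are untouched copies of $S_{n-1}^2$. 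They are glued together through coupled pair-edges (Lemma \ref{8}, Corollary \ref{9}), exactly as in the proof of Lemma \ref{11}. Given an edge $e$ of $S_n^2-\{u,v\}$, we want the final cycle to use $e$.

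\textbf{Step 1 (the damaged block).} Let $H$ be a Hamiltonian cycle of $S_n^{2:x_n}-\{u,v\}$. For $n-1\ge4$ this comes from Lemma \ref{5}, since $(u,v)$ is a $(12)$-edge or a $3$-edge inside $S_{n-1}^2$. The case $n=4$, where $S_n^{2:x_n}$ is $S_3^2$ (a $6$-cycle), and the case $n=5$ have to be handled separately, by direct inspection or a table as in Lemma \ref{3-1}.

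\textbf{Step 2 (gluing).} Pick an edge $(w,z)$ of $H$ whose coupled pair-edge $(w',z')$ lies in some block $S_n^{2:j}$ with $j\ne x_n$. Replacing $(w,z)$ by the path $w,w',P,z',z$ splices in a Hamiltonian cycle of $S_{n,n-1}^2$ on the other $n-1$ blocks containing $(w',z')$; that cycle is supplied by Lemma \ref{11}. Here $P$ is that cycle with $(w',z')$ removed. The result is a Hamiltonian cycle of $S_n^2-\{u,v\}$.

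\textbf{Step 3 (forcing $e$).} The cycle must contain the prescribed edge $e$, and there are three cases.
\begin{itemize}
\item If $e$ lies inside a block $S_n^{2:i_t}\neq S_n^{2:x_n}$, apply Lemma \ref{11} with that edge. We need its cycle to also contain some gluing edge $(w',z')$ coupled to an edge of $H$, and the two-edge control of Lemma \ref{10} is the tool for this.
\item If $e$ lies in $S_n^{2:x_n}-\{u,v\}$, we need $H$ to contain $e$, which Lemma \ref{5} alone does not give. Here we instead choose $H$ via Lemma \ref{3} on the smaller network: take a Hamiltonian cycle of $S_{n-1}^2$ through $e$ and the edge $(u,v)$ and reroute around $u,v$. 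Alternatively, apply the result inductively, since the statement for $n-1$ gives exactly a Hamiltonian cycle of $S_{n-1}^2-\{u,v\}$ through any edge. This suggests proving the lemma by \emph{induction on $n$}, with the base case $n=4$ (or $n=5$) checked directly.
\item If $e$ is an $n$-edge joining two blocks, use that edge itself as one of the inter-block connections. That means choosing the coupled pair so that $e$ is one of the two rungs $(w,w')$ or $(z,z')$, or routing the chain in Lemma \ref{11} through $e$ using Lemma \ref{2} for Hamiltonian paths with prescribed ends in each block.
\end{itemize}

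The main obstacle is the simultaneous compatibility in Step 3. The cycle inside the damaged block must contain both $e$ (when $e$ lies there) and an edge whose coupled pair-edge lands in a suitable other block. Likewise, the Lemma \ref{11}-type cycle must contain both $e$ and the gluing edge. I would first try to strengthen the induction hypothesis to a two-edge version in the spirit of Lemma \ref{10}: besides containing $e$, the cycle can be chosen to contain an $s_n^{+}$-type edge from a prescribed class $W_q$. The other approach is to exploit that $H$ has many edges, at least $(n-1)!-2$, each with coupled pair-edges into blocks indexed by its first two symbols (Corollary \ref{9}). Counting then shows that some edge of $H$ other than $e$ couples into an available block, and the small cases $n=4,5$ are verified by computer-style tables.
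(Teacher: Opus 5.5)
\textbf{Verdict: genuine gap.} Your skeleton matches the paper's: induction on $n$, the damaged block $S_n^{2:x_n}-\{u,v\}$ handled by the induction hypothesis, and the other $n-1$ blocks glued on through coupled pair-edges and Lemma~\ref{11}. When $e$ touches the damaged block, your argument is also the paper's (Cases 1 and 2): either the induction hypothesis plus any other edge of $H$, or $e$ used as a rung of a coupled $4$-cycle.

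The gap is the case that actually carries the lemma, where $e$ lies inside an undamaged block $S_n^{2:i}$ or joins two undamaged blocks. There you need a Hamiltonian cycle of $S_n^2-S_n^{2:x_n}$ that contains $e$ \emph{and} an edge whose coupled pair-edge lies in $S_n^{2:x_n}$ and misses $u,v$. Lemma~\ref{11} controls only one edge. You name Lemma~\ref{10} as ``the tool'' but then list two tentative strategies without carrying out either, so no cycle is produced. Neither strategy targets the real difficulty:
\begin{itemize}
\item The counting idea concerns $e\in S_n^{2:x_n}$, where nothing needs counting: Lemma~\ref{11} is applied to all remaining blocks, so any edge of $H$ other than $e$ works.
\item A two-edge induction hypothesis is unnecessary, since the damaged block never needs more than one prescribed edge.
\end{itemize}
Your alternative ``Lemma~\ref{3} and reroute around $u,v$'' is also invalid. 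Lemma~\ref{3} prescribes a single edge, and deleting $u,v$ from a Hamiltonian cycle leaves a path, not a cycle.

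The missing idea, from the paper's Case 3, is to make the damaged block the \emph{end} of a chain. The two-edge control of Lemma~\ref{10} is then spent only in undamaged blocks. With $u=(k+1)k\cdots21\in S_{k+1}^{2:1}$, choose $j\notin\{1,k,k+1,i\}$.
\begin{itemize}
\item Lemma~\ref{10} in $S_{k+1}^{2:i}$ gives a Hamiltonian cycle through $e$ and an $s^{+}$-edge at a vertex with first symbol $j$. By Corollary~\ref{9} this edge is coupled into $S_{k+1}^{2:j}$.
\item Lemma~\ref{10} in $S_{k+1}^{2:j}$ gives a Hamiltonian cycle through that coupled edge and an edge coupled into $S_{k+1}^{2:1}$. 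Since $u$ has no neighbour and $v$ at most one neighbour in $S_{k+1}^{2:j}$, this last coupled edge can be taken disjoint from $\{u,v\}$.
\item The induction hypothesis then yields a Hamiltonian cycle of $S_{k+1}^{2:1}-\{u,v\}$ through that edge.
\item The remaining blocks are absorbed by Lemma~\ref{7}, applied to a further edge $(w_3,z_3)$ of the block-$i$ cycle where the first two symbols of $w_3$ index unused blocks.
\end{itemize}
For $e=(w,z)$ between two undamaged blocks (Case 4), the $4$-cycle $\langle w,z,z\circ(1,2),w\circ(1,2),w\rangle$ reduces to this construction for $(w,w\circ(1,2))$. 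Lemma~\ref{11} handles $(z,z\circ(1,2))$ on the complementary blocks.

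Finally, your Step 1 breaks at $n=4$. $S_3^2$ is a triangular prism, not a $6$-cycle. For $v=us_3^{+}$, the vertex $us_3^{-}$ has degree $1$ in $S_4^{2:x_4}-\{u,v\}$, so that graph has no Hamiltonian cycle. The base case therefore cannot be built blockwise; the paper gives explicit Hamiltonian cycles of $S_4^2-\{u,v\}$ covering every edge.
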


	\begin{proof}
    By the vertex-transitivity of $S_n^2$, we may set $u = n(n-1)\cdots21\in V(S_n^{2:1})$. We prove this lemma by induction on $n$.

    Suppose $n = 4$, then $u = 4321$. For $v = u\circ(1,2) = 3421$, four Hamiltonian cycles in $S_4^2-\{u,v\}$ are illustrated in Fig \ref{fig:99}(a) with colors brown and blue. These four cycles collectively cover all edges of $S_4^2-\{u,v\}$. Similarly, for $v=us_3^{+}=2431$, there exist four Hamiltonian cycles in $S_4^2-\{u,v\}$ that cover all edges of $S_4^2-\{u,v\}$, see Fig \ref{fig:99}(b). Thus, the desired result holds.
\begin{figure}[h!]
    \centering
    \begin{subfigure}[t]{1\textwidth}
        \centering
        \includegraphics[width=\textwidth]{1.png}
    \makebox[0.5\textwidth][c]{(a)}
        \label{fig:77}
    \end{subfigure}
    \vspace{0.5cm}
    \begin{subfigure}[t]{1\textwidth}
        \centering
        \includegraphics[width=\textwidth]{2.png}
        \makebox[0.5\textwidth][c]{(b)}
        \label{fig:88}
    \end{subfigure}
    \caption{Hamiltonian cycles of $S_4^2-\{u,v\}$.}
    \label{fig:99}
\end{figure}

    Let $k$ be an integer not less than $4$. Assume the lemma holds for $n\le k$. Now we consider the case $n=k+1$. Since $v=u\circ(1,2)=k(k+1)(k-1)\cdots21$ or $v=us_3^{+}=(k-1)(k+1)k\cdots21$, both $u$ and $v$ lie in $S_{k+1}^{2:1}$. Let $(w,z)$ be an edge in $S_{k+1}^2-\{u,v\}$.
    \medskip

   \textbf{Case 1.} Both~$w$~and~$z$~belong to~$V(S_{k+1}^{2:1})$.

Since $S_{k+1}^{2:1}\cong S_k^2$ and $k\ge4$, the induction hypothesis guarantees a Hamiltonian cycle $C_1$ of $S_{k+1}^{2:1}-\{u,v\}$ containing $(w,z)$. Choose a vertex $w_1=a_1a_2\cdots a_k1$ in $C_1$ with $w_1\notin\{w,z\}$. Let $z_1$ be a neighbor of $w_1$ in $C_1:=\langle w,z,P^1[z,z_1],z_1,w_1,P^1[w_1,w],w \rangle$. It follows that $(w_1,z_1)$ has a coupled pair-edge $(w_1',z_1')$ in $S_k^{2:a_1}$ or $S_k^{2:a_2}$, and so $\langle w_1, w_1', z_1', z_1, w_1 \rangle$ is a $4$-cycle. According to Lemma \ref{11}, the edge $(w_1',z_1')$ is contained in a Hamiltonian cycle $C_2:=\langle w_1',z_1',P^2[z_1',w_1'],w_1'\rangle$ of $S_{n,k}^2$, where $S_{n,k}^2=S_n^{2:a_1}\oplus S_n^{2:a_2}\oplus\dots\oplus S_n^{2:a_k}$. Hence, we obtain a Hamiltonian cycle $C$ of $S_{k+1}^2-\{u,v\}$ containing $(w,z)$, as follows
\[
C:=\langle w_1,P^1[w_1,w],w,z,P^2[z,z_1],z_1,z_1',P^1[z_1',w_1'],w_1',w_1\rangle.
\]

	\textbf{Case 2.} Exactly one of~$w$~and~$z$~belongs to~$V(S_{k+1}^{2:1})$.

Without loss of generality, we may assume $w=a_1a_2\cdots a_k1\in V(S_{k+1}^{2:1})$ and $z\in V(S_{k+1}^{2:j})$,where $j=a_1$ or $a_2$.
Let $w_1=w\circ(1,2)$ and $z_1=z\circ(1,2)$, where $w,w_1\in V(S_{k+1}^{2:1})$, $z,z_1\in V(S_{k+1}^{2:j})$. The induction hypothesis guarantees a Hamiltonian cycle $C_1:=\langle w,w_1,P^1[w_1,w],w\rangle$ of $S_{k+1}^{2:1}-\{u,v\}$. Since the edges $(w,w_1)$ and $(z,z_1)$ form a pair of coupled edges, $\langle w,z,z_1,w_1,w\rangle$ is a $4$-cycle. By Lemma \ref{11}, the edge $(z,z_1)$ is contained in a Hamiltonian cycle $C_2:=\langle z,z_1,P^2[z_1,z],z \rangle$ of $S_{n,k}^2$, where $S_{n,k}^2=S_n^{2:a_1}\oplus S_n^{2:a_2}\oplus\dots\oplus S_n^{2:a_k}$. Hence, we obtain a Hamiltonian cycle $C$ of $S_{k+1}^2-\{u,v\}$ containing $(w,z)$, as follows
\[
C:=\langle w,P^1[w,w_1],w_1,z_1,P^2[z_1,z],z,w\rangle.
\]

	\textbf{Case 3.} Both~$w$~and~$z$~belong to~$V(S_{k+1}^{2:i})$~with~$i\neq 1$.

Since $k+1\ge5$, we may assume $j\in[k+1]\setminus\{1,k,k+1,i\}$. By Lemma \ref{10}, $S_{k+1}^{2:i}$ contains a Hamiltonian cycle $C_1:=\langle w,z,P^1[z,z_j],z_j,w_j,P^1[w_j,w],w\rangle$. Corollary \ref{9} implies that $(w_j,z_j)$ exists a coupled pair-edge $(w_j',z_j')$ in $S_{k+1}^{2:j}$. By Lemma \ref{10} again, $S_{k+1}^{2:j}$ contains a Hamiltonian cycle $C_2:=\langle w_j',z_j',P^2[z_j',z_1^{1}],z_1^{1},w_1^{1},P^2[w_1^{1},w_j'],w_j'\rangle$. Since $u=(k+1)k\cdots21$ has no neighbor in $S_{k+1}^{2:j}$ and $v$ has at most one neighbor in $S_{k+1}^{2:j}$, $w_1^1$ and $z_1^1$ are not adjacent to $u$ or $v$. This implies that $(w_1^{1},z_1^{1})$ has a coupled pair-edge $(u_1,v_1)$ such that $\{u,v\}\cap\{u_1,v_1\}=\varnothing$. By the induction hypothesis we obtain a Hamiltonian cycle $C_3:=\langle u_1,v_1,P^3[v_1,u_1],u_1\rangle$ in $S_{k+1}^{2:i}-\{u,v\}$.

Since $k\ge4$, there exists a vertex $w_3=a_1a_2a_3a_4\cdots a_ki\in V(S_{k+1}^{2:i})$ with $a_1,a_2\in[k+1]\setminus\{1,j\}$ such that $w_3\notin\{w,z\}$. The vertex $w_3$ has a neighbor $z_3$ in $C_1$. Let $P^1[z_3,z_j]$ denote the path in $C_1$ that does not contain $w_3$, and $P^1[z,w_3]$ the path in $C_1$ that does not contain $z_3$. By Lemma \ref{7}, there exists a cycle $C_4:=\langle w_3,P^4[w_3,z_3],z_3,w_3\rangle$ such that $V(C_4)=V(S_{k+1}^{2:t})\cup\{w_3,z_3\}$, where $t\in[k+1]\setminus\{i,j,1\}$. Hence, we obtain a Hamiltonian cycle $C$ of $S_{k+1}^2-\{u,v\}$ containing $(w,z)$, as follows
\[
\begin{aligned}
C:=\langle\; &
v_1,P^3[v_1,u_1],u_1,w_1^{1},P^2[w_1^{1},w_j'],w_j',w_j,P^1[w_j,w],w,z,P^1[z,w_3],w_3,\\
&P^4[w_3,z_3],z_3,P^1[z_3,z_j],z_j,z_j',P^2[z_j',z_1^{1}],z_1^{1},v_1
\rangle,
\end{aligned}
\]see Figure \ref{fig:7}.
       \begin{figure}[h!]
  \centering
  \includegraphics[width=\textwidth]{Lemma3_2-3.png}
      \makebox[0.25\textwidth][c]{$S_{n}^{2:1}$}
  \makebox[0.22\textwidth][c]{$S_{n}^{2:j}$}
  \makebox[0.25\textwidth][c]{$S_{n}^{2:i}$}
  \makebox[0.25\textwidth][c]{$S_{n}^2-S_{n}^{2:1}-S_{n}^{2:j}-S_{n}^{2:i}$}
  \caption{Illustration of Lemma \ref{3-2}.}
  \label{fig:7}
  \end{figure}

	\textbf{Case 4.} Both $w$ and $z$ belong to $V(S_{k+1}^{2:i})$ and $V(S_{k+1}^{2:j})$, respectively, where $2\le i,j\le k+1$ and $i\neq j$.

Let $w=a_1a_2\cdots a_ki$ and $z\in v(S_{k+1}^{2:j})$ with $j\in\{a_1,a_2\}$. Set $w_1=w\circ(1,2)\in V(S_{k+1}^{2:i})$ and $z_1=z\circ(1,2)\in V(S_{k+1}^{2:j})$.
Then $\langle w,z,z_1,w_1,w\rangle$ is a $4$-cycle.

Choose $t\in[k+1]\setminus\{1,k,k+1,j\}$. Then $u$ has no neighbor in $S_{k+1}^{2:t}$ and $v$ has at most one neighbor in $S_{k+1}^{2:t}$. By setting $(w,w_1)$ as $(w,z)$ in Case 3, there exists a Hamiltonian cycle
 \[
 \begin{aligned}
C_1:=\langle \; &
w,P^1[w,w_t],w_t,w_t',P^1[w_t',w_1^{1}],w_1^{1},u_1,P^1[u_1,v_1],v_1,\\
&z_1^{1},P^1[z_1^{1},z_t'],z_t',z_t,P^1[z_t,w_1],w_1,w\rangle
\end{aligned}
\] of $(S_{k+1}^{2:i}\oplus S_{k+1}^{2:t}\oplus S_{k+1}^{2:1})-\{u,v\}$.

By Lemma \ref{11}, there is a Hamiltonian cycle $C_2:=\langle z_1,z,P^2[z,z_1],z_1\rangle$ of $S_{k+1}^2-(S_{k+1}^{2:i}\oplus S_{k+1}^{2:t}\oplus S_{k+1}^{2:1})$.
Hence, we obtain a Hamiltonian cycle of $S_{k+1}^2-\{u,v\}$ containing $(w,z)$, as follows
\[
\begin{aligned}
C:= \langle \; &
v_1,P^1[v_1, u_1],u_1,w_1^{1},P^1[w_1^{1}, w_t'],w_t',w_t,P^1[w_t,w],w,z, \\
&P^2[z, z_1],z_1,w_1,P^1[w_1,z_t],z_t,z_t',P^1[z_t',z_1^{1}],z_1^{1},v_1
 \rangle,
\end{aligned}
\]see Figure \ref{fig:8}.
       \begin{figure}[h!]
  \centering
  \includegraphics[width=\textwidth]{Lemma3_2-4.png}
      \makebox[0.25\textwidth][c]{$S_{n}^{2:1}$}
  \makebox[0.22\textwidth][c]{$S_{n}^{2:t}$}
  \makebox[0.25\textwidth][c]{$S_{n}^{2:i}$}
  \makebox[0.25\textwidth][c]{$S_{n}^2-S_{n}^{2:1}-S_{n}^{2:t}-S_{n}^{2:i}$}
  \caption{Illustration of Lemma \ref{3-2}.}
  \label{fig:8}
  \end{figure}
\end{proof}

	\begin{thm}\label{3-3}
		The~$n$-dimensional split-star network~$S_n^2$~with~$n\ge4$~is 2-DCC vertex~$[3,n!/2]$-pancyclic.
	\end{thm}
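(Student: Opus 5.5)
The proof goes by induction on $n$, with Lemma \ref{3-1} as the base case $n=4$. Assume $n\ge5$ and that $S_{n-1}^2$ is 2-DCC vertex $[3,(n-1)!/2]$-pancyclic. Each $S_n^{2:i}\cong S_{n-1}^2$, so every block has this property. Fix distinct vertices $u,v$ and an integer $\ell\in[3,n!/2]$. The plan is to build the short cycle $C_1$ (length $\ell$) and the long cycle $C_2$ (length $n!-\ell$) blockwise, using the decomposition $S_n^2=S_n^{2:1}\oplus\cdots\oplus S_n^{2:n}$ into $n$ blocks of size $(n-1)!$.

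\textbf{Case A: $u,v$ lie in a common block $S_n^{2:i}$.} Write $\ell=m(n-1)!+r$ with $0\le r<(n-1)!$. Note that $m\le n/2-1$ in the relevant range, or $m\le n/2$ with $r=0$.
\begin{itemize}
\item If $\ell\le (n-1)!/2$, apply the induction hypothesis in $S_n^{2:i}$ to obtain disjoint cycles $D_1\ni u$ of length $\ell$ and $D_2\ni v$ of length $(n-1)!-\ell$. Set $C_1=D_1$. Then enlarge $D_2$ to absorb all other blocks: pick an edge of $D_2$, take its coupled pair-edge (Corollary \ref{9}) in some other block, and splice in a Hamiltonian cycle of the remaining $n-1$ blocks through that edge (Lemma \ref{11}).
\item If $(n-1)!/2<\ell<(n-1)!$, swap the roles and let $u$ lie on the cycle that gets extended. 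Here I must count on the correct side, or split into $\ell'=(n-1)!-\ell$ inside the block and absorb.
\item For larger $\ell$, use the induction hypothesis inside $S_n^{2:i}$ with an appropriate split $\ell_0$. Then attach to the $u$-cycle a union of $m$ whole blocks through coupled pair-edges and Lemma \ref{11}, and attach the remaining blocks to the $v$-cycle in the same way. The lengths add exactly, because each absorbed block is traversed Hamiltonianly by replacing one edge with the $4$-cycle detour.
\end{itemize}
Two things must be checked. First, the blocks reachable via coupled pair-edges are determined by the first two symbols of an edge endpoint, so I need edges of $D_1$ and of $D_2$ whose endpoints have prescribed first symbols. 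Second, the two attachment families must use disjoint blocks. Lemma \ref{12} is exactly the tool for this: it gives vertices $a_1a_2\cdots\in D_1$ and $b_1b_2\cdots\in D_2$ with $a_1,a_2,b_1,b_2$ distinct, so the two cycles can be routed into different blocks. Lemma \ref{10} then lets me choose the entry edges $(w_q,w_qs_n^+)$ in any target block $q$ to continue chaining.

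\textbf{Case B: $u\in S_n^{2:i}$, $v\in S_n^{2:j}$ with $i\ne j$.}
\begin{itemize}
\item For small $\ell$, build $C_1$ of length $\ell$ inside $S_n^{2:i}$, for instance via Lemma \ref{5} or the induction hypothesis with a dummy second vertex. Build $C_2$ as a Hamiltonian cycle on the rest, using Lemma \ref{5}\ref{5-1}--\ref{5-3} and Lemma \ref{3-2} to get Hamiltonian cycles of $S_n^{2:i}$ minus the $C_1$ part through a chosen edge, then chaining the other blocks with Lemma \ref{11}.
\item For larger $\ell$, give $u$'s cycle the blocks $S_n^{2:i}$ together with $m$ further blocks avoiding $j$, plus a partial cycle of length $r$ carved out of one more block. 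Lemma \ref{6} and Lemma \ref{7} produce cycles covering a union of whole blocks plus one or two extra vertices, which handles remainders like $r\in\{1,2\}$. For a general remainder $r$, apply the induction hypothesis inside one shared block to split it into pieces of lengths $r$ and $(n-1)!-r$, each attached to a different side.
\end{itemize}

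I expect the main obstacle to be the bookkeeping of remainders $r$ that are very small ($r\in\{1,2\}$) or close to $(n-1)!$. These fall outside the $[3,(n-1)!/2]$ range of the induction hypothesis, and Lemmas \ref{6}, \ref{7} and \ref{3-2} (Hamiltonian cycles after deleting a $(12)$- or $s_3^+$-pair) seem designed for exactly them. A further difficulty is guaranteeing, in each case, that the coupled pair-edges land in blocks not already used by the other cycle; I would handle this by choosing edges via Lemma \ref{12} and Lemma \ref{10}.
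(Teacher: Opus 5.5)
\textbf{Verdict: genuine gap.} Your plan matches the paper's architecture but only covers the generic lengths; the exceptional residues $0,\pm1,\pm2$ need constructions you do not give, and for $\ell=(n-1)!+1$ your suggested tool cannot work.

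\textbf{What is fine.} Your outline follows the paper: induction from Lemma \ref{3-1}; the block decomposition; an in-block split by the induction hypothesis; absorption of whole blocks through coupled pair-edges and Lemma \ref{11}; and Lemma \ref{12} to route the two cycles into different blocks. The hypothesis must be carried in its ordered form, with $u$ on the $\ell$-cycle, as the paper implicitly does; swapping the two vertices then gives every in-block length in $[3,(n-1)!-3]$. This is the right plan for $\ell\not\equiv 0,\pm1,\pm2\pmod{(n-1)!}$. In Case B the coupled pair-edges leaving the split block must also land on the correct side; the paper arranges this by prescribing the leading symbols of the two split vertices.

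\textbf{Residue $0$, same block.} The five exceptional residues are most of the actual proof (the paper's Cases 1.2--1.6 and 2.2--2.6). You leave them as ``the main obstacle'', saying only that Lemmas \ref{6}, \ref{7}, \ref{3-2} seem designed for them, and that does not close them. Take $u,v$ in a common block $S_n^{2:i}$ and $\ell=s(n-1)!$. This block must be shared, so $u$'s portion of it is not a multiple of $(n-1)!$. Lemmas \ref{6} and \ref{7} give only whole blocks plus one or two prescribed vertices, so they cannot make up the difference. The missing idea is a two-vertex exchange with a second block. Write $v=y_1y_2\cdots y_{n-1}i$ and take $v_1\in\{v\circ(1,2),vs_3^{+}\}\setminus\{u\}$, $w=vs_4^{+}$ and $w_1=w\circ(1,2)$. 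Both edges have coupled pair-edges $(v',v_1')$ and $(w',w_1')$ in $S_n^{2:y_1}$, and these are disjoint. Lemma \ref{3-2} gives a Hamiltonian cycle of $S_n^{2:i}-\{v,v_1\}$ through $(w,w_1)$, which contains $u$. It also gives a Hamiltonian cycle of $S_n^{2:y_1}-\{w',w_1'\}$ through $(v',v_1')$. Detouring along $\langle w,w',w_1',w_1,w\rangle$ and $\langle v,v',v_1',v_1,v\rangle$ gives each side exactly $(n-1)!$ vertices of these two blocks. Only then are whole blocks attached.

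\textbf{Residue $+1$ with $s=1$.} For $\ell=(n-1)!+1$ your suggestion fails in both of your cases. Lemmas \ref{6} and \ref{7} always include the two whole blocks $S_n^{2:x_1}$ and $S_n^{2:x_2}$. Moreover, no cycle can consist of one whole block together with a single vertex $x=x_1x_2\cdots x_{n-1}k$ of another block. The only neighbours of $x$ outside $S_n^{2:k}$ are $xs_n^{-}\in S_n^{2:x_1}$ and $xs_n^{+}\in S_n^{2:x_2}$, which lie in different blocks. So this length needs a different shape. The paper gives $u$ a block minus one vertex plus a coupled pair-edge in a neighbouring block $S_n^{2:m}$. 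It then routes $v$'s cycle through the deleted vertex, through $S_n^{2:m}$ minus those two vertices (Lemma \ref{5}), and through all remaining blocks by an explicit chain of Hamiltonian paths (Lemma \ref{2}). These are its Cases 1.5 and 2.5 with $s=1$, and nothing in your plan produces them.

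\textbf{Other residues.} The residues $-1$ and $\pm2$ also each need a specific pairing, which you do not spell out. For $-1$, pair a block minus a vertex (Lemma \ref{5}) against a Lemma \ref{6} cycle. For $\pm2$, pair a block minus a $(12)$- or $s_3^{+}$-edge (Lemma \ref{3-2}) against a coupled pair-edge detour.

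\textbf{Two smaller slips.} In Case A, for $(n-1)!/2<\ell\le(n-1)!-3$ it is $v$'s cycle, not $u$'s, that absorbs the other blocks: apply the hypothesis to $(v,u)$ with length $(n-1)!-\ell$. In Case B, Lemmas \ref{5} and \ref{3-2} delete only one or two vertices. So ``$S_n^{2:i}$ minus the $C_1$ part'' must come from the induction hypothesis with a dummy vertex, which again only works for $\ell\le(n-1)!-3$.
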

\begin{proof}
We prove this theorem by induction on $n$. If $n=4$, then the desired result follows from Lemma \ref{3-1}. Assume that this theorem holds for $n\ge 4$. Pick two distinct vertices $u,v\in V(S_{n+1}^2)$ and an integer $\ell$ satisfying $3\le \ell \le \frac{(n+1)!}{2}$. By induction, it suffices to find two vertex-disjoint cycles $C_1$ and $C_2$ in $S_{n+1}^2$ satisfying
\begin{itemize}
\item $u\in V(C_1)$ and $v\in V(C_2)$;
\item $|C_1|=\ell$ and $|C_2|=(n+1)!-\ell$.
\end{itemize}

Due to the vertex-transitivity of $S_{n+1}^2$, we may assume that $u=(n+1)n\cdots21\in V(S_{n+1}^{2:1})$. The proof proceeds by considering two cases according to whether $v$ lies in $S_{n+1}^{2:1}$ or not. Let $s$ be the integer such that $1\le s\le \left\lceil \frac{n+1}{2}\right\rceil$.

\textbf{Case 1.} $v\in V(S_{n+1}^{2:1})$.

Let $v=y_1y_2\cdots y_n1\in V(S_{n+1}^{2:1})$, where $\{y_1,y_2,\ldots,y_n\}=[n+1]\setminus\{1\}$.

\textbf{Case 1.1.} $3+(s-1)\cdot n!\le \ell \le s\cdot n!-3$.

Let $\ell'=\ell-(s-1)\cdot n!$. Then $3\le \ell' \le n!-3$. By induction, there exist two disjoint cycles $C_1'$ and $C_2'$ in $S_{n+1}^{2:1}$ such that $|C_1'|=\ell'$, $|C_2'|=n!-\ell'$, $u\in V(C_1')$ and $v\in V(C_2')$.

Suppose $s=1$. Then $\ell'=\ell$, and set $C_1=C_1'$. Let $v_1$ be a neighbor of $v$ in $C_2'$. By Lemma \ref{7}, there exists a cycle $C_2''$ in $S_{n+1}^2$ such that $V(C_2'')=V(S_{n+1}^2-S_{n+1}^{2:1})\cup\{v,v_1\}$. Denote paths $P^1[v,v_1]\subseteq C_2'\setminus(v,v_1)$, $P^2[v_1,v]\subseteq C_2''\setminus(v_1,v)$. Set the cycle $C_2:=\langle v,P^1[v,v_1],v_1,P^2[v_1,v],v\rangle$. It follows that $|C_2|=n!-\ell' + n\cdot n! = (n+1)!-\ell$.

Suppose $s\ge2$. Note that $C_1'$ and $C_2'$ cover all vertices of $S_{n+1}^{2:1}$. According to Lemma \ref{12}, there exist two vertices $w=a_1\cdots a_n1$ in $C_1'$ and $z=b_1\cdots b_n1$ in $C_2'$ such that $a_1,a_2,b_1,b_2$ are pairwise distinct. Choose vertices $w_1$ and $z_1$ such that $(w,w_1)\in E(C_1')$ and $(z,z_1)\in E(C_2')$. The edge $(w,w_1)$ has a coupled pair-edge $(w',w_1')$ in $S_{n+1}^{2:m_1}$, where $m_1\in\{a_1,a_2\}$. Similarly, the edge $(z,z_1)$ has a coupled pair-edge $(z',z_1')$ in $S_{n+1}^{2:m_2}$, where $m_2\in\{b_1,b_2\}$. According to Lemma \ref{11}, for integers $i_1,\dots,i_{s-2}\in[n+1]\setminus\{1,m_1,m_2\}$,~$(w',w_1')$ lies on a Hamiltonian cycle $C_1'':=\langle w',w_1',P^1[w_1',w'],w'\rangle$ in $S_{n+1,s-1}^2 = S_{n+1}^{2:m_1}\oplus S_{n+1}^{2:i_1}\oplus \cdots\oplus S_{n+1}^{2:i_{s-2}}$. Then $C_1':=\langle w,w_1,P^1[w_1,w],w\rangle$ can be extended to a cycle \[C_1:=\langle w,w',P^1[w',w_1'],w_1',w_1,P^1[w_1,w],w\rangle\] of length $\ell' + (s-1)\cdot n! = \ell$. Similarly, we can also construct a cycle \[C_2:=\langle z,z',P^2[z',z_1'],z_1',z_1,P^2[z_1,z],z\rangle\] of length $n!-\ell' + (n-s+1)\cdot n! = (n+1)!-\ell$, by concatenating $C_2':=\langle z,z_1,P^2[z_1,z],z\rangle$ with a Hamiltonian cycle $C_2'':=\langle z',z_1',P^2[z_1',z'],z'\rangle$ of $S_{n+1,n-s+1}^2 = S_{n+1}^{2:m_2} \oplus S_{n+1}^{2:j_1} \oplus \cdots \oplus S_{n+1}^{2:j_{n-s}}$, where $\{j_1,\dots,j_{n-s}\}=[n+1]\setminus\{i_1,i_2,\dots,i_2,i_{s-2},1,m_1,m_2\}$. Hence, $C_1$ and $C_2$ constitute a desired $2$-DCC, see Figure \ref{fig:9}.
       \begin{figure}[h!]
  \centering
  \includegraphics[width=\textwidth]{1_1.png}
         \makebox[0.2\textwidth][c]{$S_{n+1,n-s+1}^2$}
  \makebox[0.35\textwidth][c]{$S_{n+1}^{2:1}$}
  \makebox[0.2\textwidth][c]{$S_{n+1,s-1}^2$}
  \caption{Illustration of the proof of Case 1.1 in Theorem \ref{3-3}.}
  \label{fig:9}
  \end{figure}

\textbf{Case 1.2.} $\ell=s\cdot n!-2$.

Let $v_1=v\circ(1,2)=y_2y_1y_3\cdots y_n1$ or $v_1=vs_3^{+}=y_3y_1y_2\cdots y_n1$ such that $v_1\ne u$. Choose $n-s$ integers $\{i_1,i_2,\dots i_{n-s}\}$ in $[n+1]\setminus\{1,y_1\}$ and denote $S_{n+1,n-s+1}^2 = S_{n+1}^{2:y_1}\oplus S_{n+1}^{2:i_1}\oplus \cdots\oplus S_{n+1}^{2:i_{n-s}}$.
The edge $(v,v_1)$ has a coupled pair-edge $(v',v_1')$ in $S_{n+1}^{2:y_1}$. By Lemma \ref{11}, $(v',v_1')$ lies on a Hamiltonian cycle $C_2':=\langle v_1',P^2[v_1',v'],v',v_1'\rangle$ in $S_{n+1,n-s+1}^2$.
The cycle $C_2'$ can be extended to the cycle $C_2:=\langle v,v_1,v_1',P^2[v_1',v'],v',v\rangle$. Thus, $C_2$ is a cycle containing $v$ with length $(n-s+1)\cdot n!+2=(n+1)!-\ell$. Choose two vertices $w=a_1a_2\cdots a_n1$ and $w_1=w\circ(1,2)$, with $a_1\notin\{y_1,i_1,\dots,i_{n-s}\}$. By Lemma~\ref{3-2}, $S_n^{2:1} - \{v, v_1\}$ has a Hamiltonian cycle $C_1':=\langle w,w_1,P^1[w_1,w],w\rangle$ containing $u$.

 If $s=1$, then let $C_1 = C_1'$, which implies $|C_1'| = n! - 2$. Now suppose $s\ge2$. The edge $(w,w_1)$ has a coupled pair-edge $(w',w_1')\in E(S_{n+1}^{2:a_1})$, which lies on a Hamiltonian cycle $C_1'':=\langle w',w_1',P^1[w_1',w'],w'\rangle$ of $S_{n+1,s-1}^2=S_{n+1}^2 - S_{n+1,n-s+1}^2 - S_{n+1}^{2:1}$ by Lemma \ref{11}. This implies that $C_1'$ can be extended to a cycle \[C_1:=\langle w,w',P^1[w',w_1'],w_1',w_1,P^1[w_1,w],w\rangle\] by concatenating it with $C_1''$. Then $C_1$ contains $u$ and $|C_1|=n!-2 + [\,n+1-(n-s+1)\,]\cdot n! = s\cdot n! - 2 = \ell$. Since $C_1$ and $C_2$ are vertex-disjoint, they are a desired $2$-DCC, see Figure \ref{fig:10}.
     \begin{figure}[h!]
  \centering
  \includegraphics[width=\textwidth]{1_2.png}
         \makebox[0.2\textwidth][c]{$S_{n+1,n-s+1}^2$}
  \makebox[0.35\textwidth][c]{$S_{n+1}^{2:1}$}
  \makebox[0.2\textwidth][c]{$S_{n+1,s-1}^2$}
  \caption{Illustration of the proof of Case 1.2 in Theorem \ref{3-3}.}
  \label{fig:10}
  \end{figure}

\textbf{Case 1.3.} $\ell=s\cdot n!-1$.

According to \cite[Lemma~2.8 (i)]{Li25},~$S_{n+1}^{2:1}-\{v\}$ contains a Hamiltonian cycle $C_1'$ and $u\in V(C_1')$. If $s=1$, then let $C_1=C_1'$. By Lemma \ref{6}, there exists a cycle $C_2$ such that $V(C_2)=V(S_{n+1}^{2:y_1}\oplus S_{n+1}^{2:y_2}\oplus S_{n+1}^{2:i_1}\oplus \cdots \oplus S_{n+1}^{2:i_{n-2}})\cup\{v\}$ for integers $i_1,\ldots,i_{n-2}\in[n+1]\setminus\{1,y_1,y_2\}$. Then $|C_2|=n\cdot n!+1=(n+1)!-(n!-1)=(n+1)!-\ell$.
Hence, $C_1$ and $C_2$ form the desired $2$-DCC.

Suppose $s\ge2$. Choose a vertex $w=a_1a_2\cdots a_n1$ in $C_1'$ such that $\{a_1,a_2\}\cap\{y_1,y_2\}=\emptyset$, and $w$ has a neighbor $w_1$ in $C_1'$. Then $(w,w_1)$ has a coupled pair-edge $(w',w_1')$ in $S_{n+1}^{2:m_1}$, where $m_1=a_1$ or $a_2$. Choose $n-s-1$ integers $i_1,i_2,\dots,i_{n-s-1}\in[n+1]\setminus\{1,y_1,y_2,m_1\}$.
According to Lemma \ref{6}, there exists a cycle $C_2$ in $S_{n+1}^2$ containing $v$ such that $V(C_2)=V(S_{n+1}^{2:y_1}\oplus S_{n+1}^{2:y_2}\oplus S_{n+1}^{2;i_1}\oplus\cdots\oplus S_{n+1}^{2;i_{n-s-1}})\cup\{v\}$. Then $|C_2|=(n-s+1)\cdot n!+1$. By Lemma \ref{11},~$(w',w_1')$ lies on a Hamiltonian cycle $C_1'':=\langle w',w_1',P^1[w_1',w'],w'\rangle$ of $S_{n+1,s-1}^2=S_{n+1}^{2:m_1}\oplus S_{n+1}^{2:j_1}\oplus S_{n+1}^{2:j_2}\oplus\cdots\oplus S_{n+1}^{2:j_{s-2}}$, where $\{j_1,j_2,\dots,j_{s-2}\}\in[n+1]\setminus\{i_1,i_2,\dots,i_{n-s-1},1,y_1,y_2,m_1\}$. Hence $C_1':=\langle w,w_1,P^1[w_1,w],w\rangle$ can be extended to a cycle \[C_1:=\langle w,w',P^1[w',w_1'],w_1',w_1,P^1[w_1,w],w\rangle\] by concatenating with $C_1''$. Consequently, $|C_1|=(s-1)\cdot n!+n!-1=s\cdot n!-1=(n+1)!-|C_2|$. Clearly, $C_1$ and $C_2$ are vertex-disjoint, see Figure \ref{fig:11}.
     \begin{figure}[h!]
  \centering
  \includegraphics[width=\textwidth]{1_3.png}
      \makebox[0.19\textwidth][c]{$S_{n+1}^{2:y_2}$}
  \makebox[0.19\textwidth][c]{$S_{n+1}^{2:y_1}$}
  \makebox[0.19\textwidth][c]{$S_{n+1}^{2:1}$}
  \makebox[0.40\textwidth][c]{$S_{n+1,s-1}^{2}$}
  \caption{Illustration of the proof of Case 1.3 in Theorem \ref{3-3}.}
  \label{fig:11}
  \end{figure}

\textbf{Case 1.4.} $\ell=s\cdot n!$.

Let $v_1=v\circ(1,2)=y_2y_1y_3\cdots y_n1$ or $v_1=vs_3^{+}=y_3y_1y_2\cdots y_n1$ such that $v_1\ne u$. Choose two vertices $w=vs_4^{+}=y_4y_1y_3y_2\cdots1$ and $w_1=w\circ(1,2)=y_1y_4y_3y_2\cdots1$.
The edges $(v,v_1)$ and $(w,w_1)$ belong to $S_{n+1}^{2:1}$, and $\{w,w_1\}\cap\{v,v_1\}=\emptyset$.
Applying Lemma~\ref{3-2}, we obtain a Hamiltonian cycle $C_1':=\langle w,P^1[w,w_1],w_1,w \rangle$ in $S_{n+1}^{2:1} - \{v, v_1\}$.
Since $u\notin\{v,v_1\}$, it follows that $u\in V(C_1')$.
Moreover, $(v', v_1')$ and $(w', w_1')$ are the coupled pair-edges of
$(v, v_1)$ and $(w, w_1)$ in $S_{n+1}^{2:y_1}$, respectively,
and $\{w', w_1'\} \cap \{v', v_1'\} = \emptyset$.
Applying Lemma~\ref{3-2} again, we obtain a Hamiltonian cycle
$C_2'$ in $S_{n+1}^{2:y_1}-\{w',w_1'\}$ containing the edge $(v',v_1')$.

Suppose $s=1$. It implies $\ell=n!$.
Let
\[
C_1:=\langle w',w,P^1[w,w_1],w_1,w_1',w'\rangle.
\]
Thus $|C_1|=\ell$ and $u\in V(C_1)$.
Choose a vertex $v_2=y_2y_3\cdots 1y_1$ in $C_2'$. Let $v_3$ be a neighbor of $v_2$ in $C_2':=\langle v',v_1',P^2[v_1',v_2],v_2,v_3,P^2[v_3,v'],v'\rangle$.
By Lemma~\ref{7}, $(v_2,v_3)$ lies on a cycle $C_2'':=\langle v_2,v_3,P^2[v_3,v_2],v_2\rangle$
such that $V(C_2'')=V(S_{n+1}^2-S_{n+1}^{2:1}-S_{n+1}^{2:y_1})\cup\{v_2,v_3\}$.
Hence, let
\[
C_2:=\langle v,v_1,v_1',P^2[v_1',v_2],v_2,
P^2[v_2,v_3],v_3,P^2[v_3,v'],v',v\rangle.
\]
Then $C_2$ contains $v$ and $|C_2|=n\cdot n!=(n+1)!-\ell$.
Therefore, $C_1$ and $C_2$ form the desired $2$-DCC.

Suppose $s\ge2$. Since $|C_1'|=n!-2$, there exists a vertex $w_2=y_2y_3\cdots y_ny_11$ or $y_3y_2\cdots y_ny_11$ in $C_1'$ such that
$w_2\neq u$. The vertex $w_2$ has a neighbor $w_3$ in $C_1':=\langle w,w_1,P^1[w_1,w_2],w_2,w_3,P^1[w_3,w],w\rangle$.
The edge $(w_2,w_3)$ has a coupled pair-edge $(w_2',w_3')$ in
$S_{n+1}^{2:m_1}$, where $m_1=y_2$ or $y_3$. By Lemma~\ref{11},
$(w_2',w_3')$ lies on a Hamiltonian cycle $C_1'':=\langle w_2',w_3',P^1[w_3',w_2'],w_2'\rangle$ of
$S_{n+1,s-1}^{2}=S_{n+1}^{2:m_1}\oplus S_{n+1}^{2:i_1}\oplus\cdots\oplus S_{n+1}^{2:i_{s-2}}$,
where $i_1,\dots,i_{s-2}$ are integers in $[n+1]\setminus\{1,y_1,m_1\}$.
Let
\[
C_1:=\langle w',w_1',w_1,P^1[w_1,w_2],w_2,w_2',P^1[w_2',w_3'],w_3',w_3,P^1[w_3,w],w,w'\rangle.
\]
Then $C_1$ contains $u$ and has length $s\cdot n!$.

In $C_2'$ there exists a vertex $z=y_4y_i\cdots1y_1$, where $y_i\in\{y_2,y_3\}\setminus\{m_1\}$.
The vertex $z$ has a neighbor $z_1$ in $C_2':=\langle v',v_1',P^2[v_1',z_1],z_1,z,P^2[z,v'],v'\rangle$. The edge $(z,z_1)$
has a coupled pair-edge $(z',z_1')$ in $S_{n+1}^{2:m_2}$, where
$m_2=y_4$ or $y_i$. By Lemma~\ref{11}, $(z',z_1')$ lies on a Hamiltonian
cycle $C_2'':=\langle z',z_1',P^2[z_1',z'],z'\rangle$ of $S_{n+1,n-s}^{2}=S_{n+1}^{2}- S_{n+1,s-1}^{2}-S_{n+1}^{2:1}-S_{n+1}^{2:y_1}$.
Let
\[
C_2:=\langle v,v_1,v_1',P^2[v_1',z_1],z_1,z_1',P[z_1',z'],z',z,P^2[z,v'],v',v\rangle.
\]
Then $C_2$ contains $v$ and has length $(n-s+1)\cdot n!=(n+1)!-\ell$, as illustrated in Figure \ref{fig:12}. Hence, $C_1$ and $C_2$
form the desired $2$-DCC.
     \begin{figure}[h!]
  \centering
  \includegraphics[width=\textwidth]{1_4.png}
    \makebox[0.23\textwidth][c]{$S_{n+1,s-1}^2$}
  \makebox[0.15\textwidth][c]{$S_{n+1}^{2:1}$}
  \makebox[0.15\textwidth][c]{$S_{n+1}^{2:y_1}$}
  \makebox[0.23\textwidth][c]{$S_{n+1,n-s}^2$}
  \caption{Illustration of the proof of Case 1.4 in Theorem \ref{3-3}.}
  \label{fig:12}
  \end{figure}

\textbf{Case 1.5.} $\ell=s\cdot n!+1$.

Suppose $s = 1$, then $\ell = n! + 1$. By~\cite[Lemma 2.8 (i)]{Li25}, $S_{n+1}^{2:1} - \{v\}$ admits a Hamiltonian cycle $C_1'$ containing $u$. Let $w = y_3y_4\cdots y_1y_21\in V(C_1')$, and let $w_1$ be a neighbor of $w$ on $C_1':=\langle w,w_1,P^1[w_1,w],w\rangle$. The edge $(w, w_1)$ has a coupled pair-edge $(w', w_1')$ in $S_{n+1}^{2:m}$, where $m = y_3$ or $y_4$. Let
\[
C_1 := \langle w', w_1', w_1, P^1[w_1, w], w, w' \rangle.
\]
Then $|C_1|=n!+1$.

 By \cite[Lemma 2.8 (ii) and (iii)]{Li25}, $S_{n+1}^{2:m} - \{w', w_1'\}$ contains a Hamiltonian cycle $C_2' := \langle z, z_1, P^2[z_1,z], z \rangle$, where $z = y_1 y_j \cdots 1 m$ with $j \in \{3, 4\} \setminus \{m\}$, and $z_1$ is a neighbor of $z$ in $C_2'$.

Regardless of whether $(z, z_1)$ is a $(1,2)$-edge or an $i$-edge, the vertex $z_1$ has a neighbor in either $S_{n+1}^{2:y_j}$ or $S_{n+1}^{2:y_1}$, while $z$ has a neighbor in the other subgraph. Since the proofs are similar for $z_1=z\circ(1,2),zs_{i}^+$ and $zs_{i}^-$, we may assume $z_1=z\circ(1,2)=y_jy_1\cdots1m$. Then we let
\[
\begin{aligned}
u_1 &= z_1s_{n+1}^- = y_j m \cdots 1 y_1, &\quad
u_j &= zs_{n+1}^+ =  m y_1\cdots 1 y_j,\\
v_1 &= v s_{n+1}^- = y_2 1 \cdots y_n y_1, &\quad
v_2 &= v s_{n+1}^+ = 1 y_1 \cdots y_n y_2.
\end{aligned}
\]

When $n=4$, let $u_2 = y_j \cdots y_2$ and $v_j = u_2s_{n+1}^-$. Denote $P[v_j,u_2]=(v_j,u_2)$.
When $n\ge 5$, let $u_2=y_5 \cdots y_2$ and $v_5 = u_2s_{n+1}^-$. For each $i \in [5, n-1]$, let $u_i = y_{i+1} \cdots y_2$ and $v_{i+1} = u_is_{n+1}^-$. Let $u_n = y_j \cdots y_n$ and $v_j =u_ns_{n+1}^-$. Denote $P[v_j,u_2]=\langle v_j,u_n,P[u_n,v_n],v_n,\ldots,u_5,P[u_5,v_5],v_5,u_2\rangle$.
Whether $n=4$ or $n\ge 5$, set $P[u_i,v_i]$ be a Hamiltonian path in $S_{n+1}^{2:y_i}$. Let
\[
\begin{aligned}
C_2:= \langle \; & v,v_1,P[v_1,u_1],u_1,z_1,P^2[z_1,z],z,u_j,P[u_j,v_j],v_j,P[v_j,u_2],u_2,P[u_2,v_2],v_2,v \rangle.
\end{aligned}
\] Hence, $C_1$ and $C_2$ form the desired $2$-DCC, as illustrated in Figure \ref{fig:13}.
     \begin{figure}[h!]
  \centering
  \includegraphics[width=\textwidth]{1_5_1.png}
        \makebox[0.18\textwidth][c]{$S_{n+1}^{2:1}$}
  \makebox[0.18\textwidth][c]{$S_{n+1}^{2:m}$}
  \makebox[0.18\textwidth][c]{$S_{n+1}^{2:y_j}$}
  \makebox[0.18\textwidth][c]{$S_{n+1}^{2:y_2}$}
  \makebox[0.18\textwidth][c]{$S_{n+1}^{2:y_1}$}
  \caption{Illustration of the proof of Case 1.5 in Theorem \ref{3-3}.}
  \label{fig:13}
  \end{figure}

For $s \ge 2$, by~\cite[Lemma 2.8 (i)]{Li25}, $S_{n+1}^{2:1} - \{u\}$ admits a Hamiltonian cycle $C_2'$ containing $v$. Since $u = (n+1)n \cdots 2 1$, according to Lemma~\ref{6}, there exists a cycle $C_1$ in $S_{n+1}^{2:1}$ such that $V(C_1)=V(S_{n+1}^{2:n+1}\oplus S_{n+1}^{2:n}\oplus S_{n+1}^{2:i_1}\oplus S_{n+1}^{2:i_2}\oplus\cdots\oplus S_{n+1}^{2:i_{s-2}})\cup \{u\}$,
where $\{i_1, \dots, i_{s-2}\} \subseteq [n+1] \setminus \{1, n+1, n\}$ and $|C_1|=s\cdot n!+1$.
Since $n \ge 4$, $C_2'$ contains a vertex $w=a_1a_2\cdots a_n1$ such that $\{a_1, a_2\} \subseteq [n+1] \setminus \{1,n+1,n,i_1,\dots, i_{s-2}\}$. Let $w_1$ be a neighbor of $w$ in $C_2':=\langle w,w_1,P^2[w_1,w],w\rangle$. The edge $(w, w_1)$ has a coupled pair-edge $(w', w_1')$ in $S_{n+1}^{2:m_1}$, where $m_1 = a_1$ or $a_2$. By Lemma~\ref{11}, $(w', w_1')$ lies on a Hamiltonian cycle $C_2'':=\langle w',w_1',P^2[w_1',w'],w'\rangle$ of $S_{n+1,n-s}^{2} = S_{n+1}^{2:a_1} \oplus S_{n+1}^{2:a_2} \oplus S_{n+1}^{2:j_1} \oplus \cdots \oplus S_{n+1}^{2:j_{n-s-2}}$ with $\{j_1, \dots, j_{n-s-2}\} \subseteq [n+1] \setminus \{1, n+1, n, i_1, \dots, i_{s-2}, a_1, a_2\}$.
Finally, let
\[
C_2 := \langle w, P^2[w, w_1], w_1, w_1', P^2[w_1', w'], w', w \rangle.
\]
Then $C_2$ contains $v$ and has length $(n-s) \cdot n! + n! - 1 = (n+1)! - \ell$.
Hence, $C_1$ and $C_2$ form the desired $2$-DCC, as illustrated in Figure \ref{fig:14}.
     \begin{figure}[h!]
  \centering
  \includegraphics[width=\textwidth]{1_5_2.png}
    \makebox[0.4\textwidth][c]{$S_{n+1,n-s}^{2}$}
      \makebox[0.19\textwidth][c]{$S_{n+1}^{2:1}$}
  \makebox[0.2\textwidth][c]{$S_{n+1}^{2:n}$}
  \makebox[0.18\textwidth][c]{$S_{n+1}^{2:{n+1}}$}

  \caption{Illustration of the proof of Case 1.5 in Theorem \ref{3-3}.}
  \label{fig:14}
  \end{figure}

\textbf{Case 1.6.} $\ell=s\cdot n!+2$.

From the two vertices $u\circ(1,2)=n(n+1)(n-1)\cdots21$ and $us_3^{+}=(n-1)(n+1)n(n-2)\cdots21$, we choose one that is different from \(v\) and denote it by \(u_1\). The edge $(u,u_1)$ has a coupled pair-edge $(u',u_1')$ in $S_{n+1}^{2:n+1}$.
According to Lemma~\ref{11}, $(u',u_1')$ lies on a Hamiltonian cycle $C_1':=\langle u',u_1',P^1[u_1',u'],u'\rangle$ of
$S_{n+1,s}^2 =S_{n+1}^{2:n+1}\oplus S_{n+1}^{2:i_1}\oplus\cdots\oplus S_{n+1}^{2:i_{s-1}}$,
where $i_1,i_2,\dots,i_{s-1}\in[n+1]\setminus\{1,n+1\}$.
Let
\[
C_1:=\langle u,u_1,u_1',P^1[u_1',u'],u',u\rangle .
\] Then $C_1$ contains $u$ and has length $s\cdot n!+2$.

By Lemma~\ref{3-2}, $S_{n+1}^{2:1}-\{u,u_1\}$ exists a Hamiltonian cycle $C_2':=\langle z,P^2[z,z_1],z_1,z\rangle$ of length $n!-2$, where
$z=a_1a_2\cdots a_n1$ with $a_1,a_2\in[n+1]\setminus\{1,n+1,i_1,\dots,i_{s-1}\}$ and $z_1=z\circ(1,2)$.
The edge $(z,z_1)$ has a coupled pair-edge $(z',z_1')$ in $S_{n+1}^{2:a_1}$, which lies on a Hamiltonian cycle $C_2'':=\langle z',z_1',P^2[z_1',z'],z'\rangle$ of
$S_{n+1,n-s}^2=S_{n+1}^{2:a_1}\oplus S_{n+1}^{2:a_2}\oplus S_{n+1}^{2:j_1}\oplus\cdots\oplus S_{n+1}^{2:j_{n-s-2}}$, where $j_1,j_2,\ldots,j_{n-s-2}\in[n+1]\setminus\{1,n+1,a_1,a_2,i_1,\dots,i_{s-1}\}$.
Let
\[
C_2:=\langle z,P^2[z,z_1],z_1,z_1',P^2[z_1',z'],z',z\rangle .
\]
Thus $C_2$ contains $v$ and has length $(n-s)\cdot n!+n!-2=(n+1)!-\ell$.
Hence, $C_1$ and $C_2$ form the desired $2$-DCC, as illustrated in Figure \ref{fig:15}.
     \begin{figure}[h!]
  \centering
  \includegraphics[width=\textwidth]{1_6.png}
       \makebox[0.2\textwidth][c]{$S_{n+1,s}^2$}
  \makebox[0.35\textwidth][c]{$S_{n+1}^{2:1}$}
  \makebox[0.2\textwidth][c]{$S_{n+1,n-s}^2$}
  \caption{Illustration of the proof of Case 1.6 in Theorem \ref{3-3}.}
  \label{fig:15}
  \end{figure}

\textbf{Case 2.} $v\in V(S_{n+1}^{2:i})$, where $i\neq 1$.

\textbf{Case 2.1.} $3+(s-1)\cdot n!\le \ell \le s\cdot n!-3$.

Suppose $s=1$. Let $u_1=n(n+1)\cdots21\in V(S_{n+1}^{2:1})$.
By the induction hypothesis, there exist two vertex-disjoint cycles $C_1'$ and $C_2'$ in $S_{n+1}^{2:1}$ such that
$|C_1'|=\ell$ and $|C_2'|=n!-\ell$, where $u\in V(C_1')$ and $u_1\in V(C_2')$.
Let $u_1'$ be a neighbor of $u_1$ in $C_2':=\langle u_1,u_1',P^2[u_1',u_1],u_1\rangle$.
By Lemma~\ref{7}, there exists a cycle $C_2'':=\langle u_1,u_1',P^2[u_1',u_1],u_1\rangle$ in $S_{n+1}^2$ such that
$V(C_2'') = V(S_{n+1}^2 - S_{n+1}^{2:1})\cup\{u_1,u_1'\}$.
Let $C_2:=\langle u_1',P^2[u_1',u_1],u_1,P^2[u_1,u_1'],u_1'\rangle$. Then $C_2$ contains $v$ and $|C_2|=n\cdot n!+n!-\ell=(n+1)!-\ell$.
Hence, $C_1$ and $C_2$ form the desired $2$-DCC.

Suppose $s\ge 2$. Let $\ell'=\ell-(s-1)\cdot n!$. It follows that $3\le\ell'\le n!-3$. We choose two vertices $w=1a_1\cdots a_{n-1}j$ and
$z=ib_1\cdots b_{n-1}j$ in $S_{n+1}^{2:j}$, where $b_1\notin\{1,a_1\}$.
By the induction hypothesis, $S_{n+1}^{2:j}$ has two disjoint cycles $C_1'$ and $C_2'$ such that
$|C_1'|=\ell'$ and $|C_2'|=n!-\ell'$, where $w\in V(C_1')$ and $z\in V(C_2')$.
Let $w_1$ be a neighbor of $w$ in $C_1':=\langle w,w_1,P^1[w_1,w],w\rangle$.

If $w_1\in\{w\circ(1,2),ws_k^{+}\}$ for $k\in[3,n]$, then $(w,w_1)$ has a coupled pair-edge $(w',w_1')$ in $S_{n+1}^{2:1}$. By Lemma~\ref{11}, for integers $i_1,\dots,i_{s-2}\in[n+1]\setminus\{1,i,j,b_1\}$, the edge $(w',w_1')$ lies on a Hamiltonian cycle $C_1'':=\langle w',w_1',P^1[w_1',w'],w'\rangle$ of $S_{n+1,s-1}^2 = S_{n+1}^{2:1} \oplus S_{n+1}^{2:i_1}\oplus\cdots\oplus S_{n+1}^{2:i_{s-2}}$. Thus $C_1'$ can be extended to a cycle \[C_1:=\langle w,w',P^1[w',w_1'],w_1',w_1,P^1[w_1,w],w\rangle\] by concatenating it with $C_1''$. Thus $C_1$ contains $u$ and has length $\ell'+(s-1)\cdot n!=\ell$. We choose a vertex $z'$ in $C_2':=\langle z,z',P^2[z',z],z\rangle$. By Lemma \ref{7}, there exists a cycle $C_2'':=\langle z',z,P^2[z,z'],z'\rangle$ in $S_{n+1}^2$ such that $V(C_2'')=V(S_{n+1}^2-S_{n+1,s-1}^2-S_{n+1}^{2:j})\cup\{z,z'\}$. Let $C_2:=\langle z',P^2[z',z],z,P^2[z,z'],z'\rangle$. Thus $C_2$ contains $v$ and has length $n!-\ell' + (n-s+1)\cdot n! = (n+1)!-\ell$. Hence, $C_1$ and $C_2$ form the desired $2$-DCC, as illustrated in Figure \ref{fig:16}.
     \begin{figure}[h!]
  \centering
  \includegraphics[width=\textwidth]{2_1.png}
    \makebox[0.18\textwidth][c]{$S_{n+1}^{2:b_1}$}
  \makebox[0.18\textwidth][c]{$S_{n+1}^{2:i}$}
  \makebox[0.2\textwidth][c]{$S_{n+1}^{2:1}$}
  \makebox[0.4\textwidth][c]{$S_{n+1,s-1}^{2}$}
  \caption{Illustration of the proof of Case 2.1 in Theorem \ref{3-3}.}
  \label{fig:16}
  \end{figure}

If $w_1=ws_k^{-}$ for $k\in[3,n]$, then $(w,w_1)$ has a coupled pair-edge $(w',w_1')$ in $S_{n+1}^{2:a_1}$. Define a mapping $\sigma=(1~a_1)$ that swaps $1$ and $a_1$ in all the vertices of $S_{n+1}^{2:j}$. It follows that $\sigma=(1~a_1)$ preserves all adjacencies of $S_{n+1}^{2:j}$. Hence, there exist two vertex-disjoint cycles $C_1''$ and $C_2''$ in $S_{n+1}^{2:j}$ such that $C_1'' \cong C_1'$ and $C_2'' \cong C_2'$. Let $\sigma(w)=w_2$, $\sigma(w_1)=w_3$, and $\sigma(z)=z_1$. Then $(w_2,w_3)\in E(C_1'')$, $z_1\in V(C_2'')$. Furthermore, $(w_2,w_3)$ has a coupled pair-edge $(w_2',w_3')$ in $S_{n+1}^{2:1}$. Since the proof is similar to the case where $w_1\in\{w\circ(1,2),ws_k^{+}\}$, we also obtain two cycles $C_1$ and $C_2$ such that $C_1$ and $C_2$ form the desired $2$-DCC.

\textbf{Case 2.2.} $\ell=s\cdot n!-2$.

Choose two adjacent vertices $w=ia_1\cdots a_{n-1}1$ and $z=w\circ(1,2)$ in $S_{n+1}^{2:1}$ such that $u\notin\{w,z\}$. Then $(w,z)$ has a coupled pair-edge $(w',z')$ in $S_{n+1}^{2:i}$, which lies on a Hamiltonian cycle $C_2':=\langle w',z',P^2[z',w'],w'\rangle$ of $S_{n+1,n-s+1}^{2}=S_{n+1}^{2:i} \oplus S_{n+1}^{2:i_1}\oplus\cdots\oplus S_{n+1}^{2:i_{n-s}}$, where $\{i_1,\ldots,i_{n-s}\}\subseteq [n+1]\setminus\{1,i\}$. Let $C_2:=\langle z,z',P^2[z',w'],w',w,z\rangle$. Then $|C_2|=(n+1-s)\cdot n!+2=(n+1)!-\ell$ and $v\in V(C_2)$.

Suppose $s=1$. In view of \cite[Lemma 2.8 (ii) and (iii)]{Li25},~$S_{n+1}^{2:1}-\{w,z\}$ has a hamiltonian cycle $C_1$ containing $u$ and $|C_1|=n!-2$.
Suppose $s\ge 2$. Let $w_1=b_1b_2\cdots b_n1$ and $z_1=w_1\circ(1,2)$, where $b_1\in[n+1]\setminus\{i,i_1,\ldots,i_{n-s}\}$. By Lemma~\ref{3-2}, there exists a Hamiltonian cycle $C_1':=\langle w_1,P^1[w_1,z_1],z_1,w_1\rangle$ in $S_{n+1}^{2:1}-\{w,z\}$. The edge $(w_1,z_1)$ has a coupled pair-edge $(w_1',z_1')$ in $S_{n+1}^{2:b_1}$. According to Lemma~\ref{11}, $(w_1',z_1')$ lies on a cycle $C_1'':=\langle w_1',P^1[w_1',z_1'],z_1',w_1'\rangle$ in $S_{n+1,s-1}^2=S_{n+1}^{2}-S_{n+1}^{2:1}-S_{n+1,n-s+1}^{2}$. By concatenating $C_1'$ and $C_1''$, we obtain a cycle $C_1:=\langle w_1,P^1[w_1,z_1],z_1,z_1',P^1[z_1',w_1'],w_1',w_1\rangle$ of length $s\cdot n! - 2$ containing $u$. It follows that $C_1$ and $C_2$ are the desired $2$-DCC, as illustrated in Figure \ref{fig:17}.
     \begin{figure}[h!]
  \centering
  \includegraphics[width=\textwidth]{2_2.png}
     \makebox[0.2\textwidth][c]{$S_{n+1,n-s+1}^2$}
  \makebox[0.35\textwidth][c]{$S_{n+1}^{2:1}$}
  \makebox[0.2\textwidth][c]{$S_{n+1,s-1}^2$}
  \caption{Illustration of the proof of Case 2.2 in Theorem \ref{3-3}.}
  \label{fig:17}
  \end{figure}

\textbf{Case 2.3.} $\ell=s\cdot n!-1$.

Choose a vertex $u_1=ia_1a_2\cdots a_{n-1}1$ in $S_{n+1}^{2:1}$ such that $u_1\neq u$. According to \cite[Lemma~2.8 (i)]{Li25},~$S_{n+1}^{2:1}-\{u_1\}$ contains a Hamiltonian cycle $C_1'$ and $u\in V(C_1')$. If $s=1$, then $C_1=C_1'$. By Lemma \ref{6}, there exists a cycle $C_2$ such that $V(C_2)=V(S_{n+1}^{2}-S_{n+1}^{2:1})\cup\{u_1\}$. The length of $C_2$ is $n\cdot n!+1=(n+1)!-(n!-1)=(n+1)!-\ell$. Hence, $C_1$ and $C_2$ form the desired $2$-DCC.

Suppose $s\ge2$. Let $w=b_1b_2\cdots b_n1\in V(C_1')$, where $\{b_1,b_2\}\cap\{i,a_1\}=\emptyset$, and $w$ has a neighbor $w_1$ in $C_1':=\langle w,w_1,P^1[w_1,w],w\rangle$. Then $(w,w_1)$ has a coupled pair-edge $(w',w_1')$ in $S_{n+1}^{2:m_1}$, where $m_1=b_1$ or $b_2$. Choose $n-s-1$ integers $i_1,i_2,\dots,i_{n-s-1}\in[n+1]\setminus\{1,i,a_1,m_1\}$. According to Lemma \ref{6}, there exists a cycle $C_2$ in $S_{n+1}^2$ containing $v$ such that $V(C_2)=V(S_{n+1}^{a_1}\oplus S_{n+1}^{i}\oplus S_{n+1}^{i_1}\oplus\cdots\oplus S_{n+1}^{i_{n-s-1}})\cup \{u_1\}$. Then $|C_2|=(n-s+1)\cdot n!+1$. By Lemma \ref{11},~$(w',w_1')$ lies on a Hamiltonian cycle $C_1'':=\langle w',w_1',P^1[w_1',w'],w'\rangle$ of $S_{n+1,s-1}^2=S_{n+1}^{2:m_1}\oplus S_{n+1}^{2:j_1}\oplus S_{n+1}^{2:j_2}\oplus\cdots\oplus S_{n+1}^{2:j_{s-2}}$, where $j_1,j_2,\dots,j_{s-2}\in[n+1]\setminus\{i_1,i_2,\dots,i_{n-s-1},1,a_1,i,m_1\}$. Hence, $C_1'$ can be extended to a cycle $C_1:=\langle w_1,P^1[w_1,w],w,w',P^1[w',w_1'],w_1',w_1\rangle$ by concatenating with $C_1''$. Consequently, $|C_1|=(s-1)\cdot n!+n!-1=s\cdot n!-1$. Hence, $C_1$ and $C_2$ are vertex-disjoint, as illustrated in Figure \ref{fig:18}.
     \begin{figure}[h!]
  \centering
  \includegraphics[width=\textwidth]{2_3.png}
  \makebox[0.18\textwidth][c]{$S_{n+1}^{2:a_1}$}
  \makebox[0.18\textwidth][c]{$S_{n+1}^{2:i}$}
  \makebox[0.2\textwidth][c]{$S_{n+1}^{2:1}$}
  \makebox[0.4\textwidth][c]{$S_{n+1,s-1}^{2}$}
  \caption{Illustration of the proof of Case 2.3 in Theorem \ref{3-3}.}
  \label{fig:18}
  \end{figure}

\textbf{Case 2.4.} $\ell=s\cdot n!$.

Choose a vertex $u_1=u\circ(1,2)\in S_{n+1}^{2:1}$. According to Lemma~\ref{11}, for integers $i_1,i_2,\dots,i_{s-1}\in[n+1]\setminus\{1,i\}$, there exists a cycle $C_1$ in $S_{n+1,s}^2 = S_{n+1}^{2:1} \oplus S_{n+1}^{2:i_1} \oplus \cdots \oplus S_{n+1}^{2:i_{s-1}}$ containing $u$. The length of $C_1$ is $n! + (s-1)\cdot n! = s\cdot n!$. Similarly, we can construct a cycle $C_2$ in $S_{n+1,n+1-s}^2=S_{n+1}^2 - S_{n+1,s}^2$ satisfying $v\in V(C_2)$ and $|C_2| =(n+1-s)\cdot n! = (n+1)! - \ell$. Hence, $C_1$ and $C_2$ form the desired $2$-DCC, as illustrated in Figure \ref{fig:19}.
     \begin{figure}[h!]
  \centering
  \includegraphics[width=\textwidth]{2_4.png}
        \makebox[0.45\textwidth][c]{$S_{n+1,s}^{2}$}
  \makebox[0.45\textwidth][c]{$S_{n+1,n-s+1}^{2}$}
  \caption{Illustration of the proof of Case 2.4 in Theorem \ref{3-3}.}
  \label{fig:19}
  \end{figure}

%

\textbf{Case 2.5.} $\ell=s\cdot n!+1$.

If $s = 1$, then $\ell = n! + 1$. By~\cite[Lemma 2.8 (i)]{Li25}, $S_{n+1}^{2:1} - \{u'\}$ admits a Hamiltonian cycle $C_1'$ containing $u$, where $u'=a_1a_2\cdots a_n1$ with $a_1=i$. Let $w = a_3a_4\cdots a_1a_21\in V(C_1')$, where $a_3,a_4\in [n+1]\setminus\{a_1,a_2\}$. Let $w_1$ be a neighbor of $w$ on $C_1':=\langle w,w_1,P^1[w_1,w],w\rangle$. The edge $(w,w_1)$ has a coupled pair-edge $(w', w_1')$ in $S_{n+1}^{2:m}$, where $m = a_3$ or $a_4$. Let
\[
C_1 := \langle w', w_1', w_1, P^1[w_1, w], w, w' \rangle.
\]
Then $|C_1|=n!+1$.

By \cite[Lemma 2.8 (ii) and (iii)]{Li25}, $S_{n+1}^{2:m} - \{w', w_1'\}$ contains a Hamiltonian cycle $C_2' = \langle z, z_1, P^2[z_1,z], z \rangle$, where $z = a_1 a_j \cdots 1 m$ with $j \in \{3, 4\} \setminus \{m\}$, and $z_1$ is a neighbor of $z$ in $C_2'$.
Regardless of whether $(z, z_1)$ is a $(1,2)$-edge or an $i$-edge, the vertex $z_1$ has a neighbor in either $S_{n+1}^{2:a_j}$ or $S_{n+1}^{2:a_1}$, while $z$ has a neighbor in the other subgraph. Since the proofs are similar for $z_1=z\circ(1,2),zs_{k}^+$ and $zs_{k}^-$, we may assume $z_1=z\circ(1,2)=a_ja_1\cdots1m$. Then we let
\[
\begin{aligned}
u_1 &= z_1s_{n+1}^+ = m a_j \cdots 1 a_1, &\quad
u_j &= zs_{n+1}^+ =  m a_1\cdots 1 a_j,\\
v_1 &= u' s_{n+1}^- = a_2 1 \cdots a_n a_1, &\quad
v_2 &= u' s_{n+1}^+ = 1 a_1 \cdots a_n a_2.
\end{aligned}
\]

If $n=4$, then set $P[v_j,u_2]=(v_j,u_2)$, where $u_2 = a_j \cdots a_2$ and $v_j = u_2s_{n+1}^-$. Now suppose $n\ge 5$. For each $r \in [5, n-1]$, denote $u_r = a_{r+1} \cdots a_2$ and $v_{r+1} = u_rs_{n+1}^-$.
Let $u_2=a_5 \cdots a_2$, $u_n = a_j \cdots a_n$, $v_5 = u_2s_{n+1}^-$ and $v_j =u_ns_{n+1}^-$. By Lemma \ref{2}, there exists a Hamiltonian path $P[u_r,v_r]$ in $S_{n+1}^{2:a_r}$. Denote $P[v_j,u_2]=\langle v_j,u_n,P[u_n,v_n],v_n,\ldots,u_5,P[u_5,v_5],v_5,u_2\rangle$. We conclude that $C_1$ and
\[
\begin{aligned}
C_2:= \langle \; & u',v_1,P[v_1,u_1],u_1,z_1,P^2[z_1,z],z,u_j,P[u_j,v_j],v_j,P[v_j,u_2],u_2,P[u_2,v_2],v_2,u' \rangle
\end{aligned}
\]
form the desired $2$-DCC for $n\geq4$, as illustrated in Figure \ref{fig:20}.
     \begin{figure}[h!]
  \centering
  \includegraphics[width=\textwidth]{2_5_1.png}
    \makebox[0.18\textwidth][c]{$S_{n+1}^{2:1}$}
  \makebox[0.2\textwidth][c]{$S_{n+1}^{2:m}$}
  \makebox[0.2\textwidth][c]{$S_{n+1}^{2:a_j}$}
  \makebox[0.2\textwidth][c]{$S_{n+1}^{2:a_2}$}
  \makebox[0.18\textwidth][c]{$S_{n+1}^{2:i}$}
  \caption{Illustration of the proof of Case 2.5 in Theorem \ref{3-3}.}
  \label{fig:20}
  \end{figure}

For $s \ge 2$, we choose a vertex $w=1a_n\cdots a_2a_1$ in $S_{n+1}^{2:a_1}$ such that $a_n\neq i$. According to Lemma~\ref{6}, there exists a cycle $C_1$ in $S_{n+1}^{2}$ such that $V(C_1)=V(S_{n+1}^{2:1}\oplus S_{n+1}^{2:a_n}\oplus S_{n+1}^{2:i_1}\oplus S_{n+1}^{2:i_2}\oplus\cdots\oplus S_{n+1}^{2:i_{s-2}})\cup \{w\}$,
where $\{i_1, \dots, i_{s-2}\} \subset [n+1] \setminus \{1,a_1,i\}$ and $|C_1|=s\cdot n!+1$.
By~\cite[Lemma 2.8 (i)]{Li25}, $S_{n+1}^{2:a_1} - \{w\}$ admits a Hamiltonian cycle $C_2':=\langle z,z_1,P^2[z_1,z],z\rangle$, where $z=ia_r\cdots a_1$ with $a_r \in [n+1] \setminus \{1,i_1, \dots, i_{s-2}\}$, and $z_1$ is a neighbor of $z$ in $C_2'$. By Lemma~\ref{7}, there exists a cycle $C_2'':=\langle z,z_1,P^2[z_1,z],z\rangle$ such that $V(C_2'')=V(S_{n+1}^{2:i} \oplus S_{n+1}^{2:a_r} \oplus S_{n+1}^{2:j_1} \oplus \cdots \oplus S_{n+1}^{2:j_{n-s-2}})\cup \{z,z_1\}$ with $\{j_1, \dots, j_{n-s-2}\} \subseteq [n+1] \setminus \{1, i, a_r, a_1, i_1, \dots, i_{s-2}\}$.
Finally, let
\[
C_2 := \langle z_1,P^2[z_1,z],z,P^2[z,z_1],z_1 \rangle.
\]
Then $C_2$ contains $v$ and has length $(n-s) \cdot n! + n! - 1 = (n+1)! - \ell$.
Hence, $C_1$ and $C_2$ form the desired $2$-DCC, as illustrated in Figure \ref{fig:21}.
     \begin{figure}[h!]
  \centering
  \includegraphics[width=\textwidth]{2_5_2.png}
    \makebox[0.18\textwidth][c]{$S_{n+1}^{2:a_r}$}
  \makebox[0.2\textwidth][c]{$S_{n+1}^{2:i}$}
  \makebox[0.2\textwidth][c]{$S_{n+1}^{2:a_1}$}
  \makebox[0.2\textwidth][c]{$S_{n+1}^{2:1}$}
  \makebox[0.18\textwidth][c]{$S_{n+1}^{2:a_n}$}
  \caption{Illustration of the proof of Case 2.5 in Theorem \ref{3-3}.}
  \label{fig:21}
  \end{figure}

\textbf{Case 2.6.} $\ell=s\cdot n!+2$.

Choose two adjacent vertices $w=a_1a_2\cdots a_n1$ and $w_1=w\circ(1,2)$ in $S_{n+1}^{2:1}$, where $a_1\ne i$. By Lemma~\ref{11}, for integers $i_1,i_2,\dots,i_{s-1}\in[n+1]\setminus\{1,i,a_1\}$, there exists a Hamiltonian cycle $C_1':=\langle w,w_1,P^1[w_1,w],w\rangle$ in $S_{n+1,s}^2 = S_{n+1}^{2:1} \oplus S_{n+1}^{2:i_1} \oplus \cdots \oplus S_{n+1}^{2:i_{s-1}}$. The edge $(w,w_1)$ has a coupled pair-edge $(w',w_1')$ in $S_{n+1}^{2:a_1}$. Hence, $C_1'$ can be extended to a cycle $C_1:=\langle w_1,P^1[w_1,w],w,w',w_1',w_1\rangle$ and $|C_1|=s\cdot n!+2$.

By Lemma~\ref{3-2}, $S_{n+1}^{2:a_1}-\{w',w_1'\}$ has a Hamiltonian cycle $C_2':=\langle z,z_1,P^2[z_1,z],z\rangle$, where $z=ib_1\cdots b_{n-1}a_1$ and $z_1=z\circ(1,2)$. The edge $(z,z_1)$ has a coupled pair-edge $(z',z_1')$ in $S_{n+1}^{2:i}$. According to Lemma~\ref{11}, there exists a Hamiltonian cycle $C_2'':=\langle z',z_1',P^2[z_1',z'],z'\rangle$ in $S_{n+1,n-s}^2=S_{n+1}^2- S_{n+1,s}^2-S_{n+1}^{2:a_1}$. Let \[C_2:=\langle z_1,P^2[z_1,z],z,z',P^2[z',z_1'],z_1',z_1\rangle.\] Then $C_2$ contains $v$ and has length $(n-s) \cdot n! + n! - 2 = (n+1)! - \ell$.
Thus, $C_1$ and $C_2$ form the desired $2$-DCC, as illustrated in Figure \ref{fig:22}.
     \begin{figure}[h!]
  \centering
  \includegraphics[width=\textwidth]{2_6.png}
   \makebox[0.2\textwidth][c]{$S_{n+1,s}^2$}
  \makebox[0.35\textwidth][c]{$S_{n+1}^{2:a_1}$}
  \makebox[0.2\textwidth][c]{$S_{n+1,n-s}^2$}
  \caption{Illustration of the proof of Case 2.6 in Theorem \ref{3-3}.}
  \label{fig:22}
  \end{figure}

  Therefore, for any pair of distinct vertices $u,v$ and any integer $\ell$ satisfying $3\le \ell \le \frac{(n+1)!}{2}$, there exist two vertex-disjoint cycles satisfying the prescribed conditions. Thus the induction step is finished, and the theorem follows.
\end{proof}

%
%
%

\appendix
\begin{table}[H]
\centering
\begin{adjustbox}{angle=90,max height=\textheight} 
\begin{minipage}{\textheight} 
\centering
\section{Additional Tables}
\caption{Some disjoint cycles in $S_4^2$ containing $u=1234$ and $v=2134$ respectively.}
\label{3-11}

\footnotesize
\setlength{\tabcolsep}{2pt}
\renewcommand{\arraystretch}{0.9}

\begin{tabular}{@{}c l@{}}
\toprule
$\ell$ & Cycles \\
\midrule

$3$ &
$C_1=\langle\mathbf{1234},3124,2314,1234\rangle$\\
&
$C_2=\langle\mathbf{2134},1432,4132,1342,3142,4312,3412,4213,2143,1243,2413,4123,1423,4321,3241,2431,4231,2341,3421,1324,3214,2134\rangle$
\\
\midrule

$4$ &
$C_1=\langle\mathbf{1234},4132,3412,2314,1234\rangle$\\
&
$C_2=\langle\mathbf{2134},3214,1324,3124,1423,4123,1243,2143,4213,2413,4312,1432,3142,1342,3241,2431,4321,3421,2341,4231,2134\rangle$
\\
\midrule

$5$ &
$C_1=\langle\mathbf{1234},2314,3214,1324,3124,1234\rangle$\\
&
$C_2=\langle\mathbf{2134},1432,4132,1342,3142,4312,3412,4213,2413,1243,4123,1423,2143,3241,2341,3421,4321,2431,4231,2134\rangle$
\\
\midrule

$6$ &
$C_1=\langle\mathbf{1234},3124,2314,3412,1342,4132,1234\rangle$\\
&
$C_2=\langle\mathbf{2134},1324,3214,4312,1432,3142,1243,4123,2413,4213,1423,2143,3241,2341,3421,4321,2431,4231,2134\rangle$
\\\midrule

$7$ &
$C_1=\langle\mathbf{1234},4132,3412,2314,3214,1324,3124,1234\rangle$\\
&
$C_2=\langle\mathbf{2134},1432,4312,3142,1342,2143,1243,4123,2413,4213,1423,4321,3421,2341,3241,2431,4231,2134\rangle$
\\\midrule

$8$ &
$C_1=\langle\mathbf{1234},4132,1342,3412,2314,3214,1324,3124,1234\rangle$\\
&
$C_2=\langle\mathbf{2134},1432,4312,3142,1243,2143,4213,2413,4123,1423,4321,3421,2341,3241,2431,4231,2134\rangle$
\\\midrule

$9$ &
$C_1=\langle\mathbf{1234},3124,2314,3412,4312,1432,3142,1342,4132,1234\rangle$\\
&
$C_2=\langle\mathbf{2134},1324,3214,2413,4123,1243,2143,4213,1423,4321,3421,2341,3241,2431,4231,2134\rangle$
\\\midrule

$10$ &
$C_1=\langle\mathbf{1234},3124,2314,4213,3412,4312,1432,3142,1342,4132,1234\rangle$\\
&
$C_2=\langle\mathbf{2134},1324,3214,2413,4123,1243,2143,1423,4321,3421,2341,3241,2431,4231,2134\rangle$
\\\midrule

$11$ &
$C_1=\langle\mathbf{1234},3124,2314,4213,2143,1342,3412,4312,3142,1432,4132,1234\rangle$\\
&
$C_2=\langle\mathbf{2134},1324,3214,2413,1243,4123,1423,4321,3421,2341,3241,2431,4231,2134\rangle$
\\\midrule

$12$ &
$C_1=\langle\mathbf{1234},3124,2314,4213,1423,2143,1342,3412,4312,3142,1432,4132,1234\rangle$\\
&
$C_2=\langle\mathbf{2134},1324,3214,2413,1243,4123,3421,4321,2431,3241,2341,4231,2134\rangle$
\\

\bottomrule
\end{tabular}

\end{minipage} 
\end{adjustbox}
\end{table}

\begin{table}[htbp]
\centering
\begin{adjustbox}{angle=90,max height=\textheight} 
\begin{minipage}{\textheight} 
\centering
\caption{Some disjoint cycles in $S_4^2$ containing $u=1234$ and four vertex $\{3124,1324,3214,2314\}$ respectively.}
\label{3-12}

\footnotesize
\setlength{\tabcolsep}{2pt}
\renewcommand{\arraystretch}{0.9}

\begin{tabular}{@{}c l@{}}
\toprule
$\ell$ & Cycles \\
\midrule

$3$ &
$C_1=\langle\mathbf{1234},4132,2431,1234\rangle$\\
&
$C_2=\langle\mathbf{3124,1324},2134,\mathbf{3214,2314},3412,1342,3142,1432,4312,2413,4213,1423,4123,1243,2143,3241,2341,4231,3421,4321,3124\rangle$
\\
\midrule

$4$ &
$C_1=\langle\mathbf{1234},4132,1432,2134,1234\rangle$\\
&
$C_2=\langle\mathbf{3124,1324,3214,2314},3412,1342,3142,4312,2413,4213,1423,4123,1243,2143,3241,2341,3421,4231,2431,4321,3124\rangle$
\\
\midrule

$5$ &
$C_1=\langle\mathbf{1234},4132,2431,4231,2134,1234\rangle$\\
&
$C_2=\langle\mathbf{3124,1324,3214,2314},3412,1342,3142,1432,4312,2413,4213,1423,4123,1243,2143,3241,2341,3421,4321,3124\rangle$
\\
\midrule

$6$ &
$C_1=\langle\mathbf{1234},4132,1342,3142,1432,2134,1234\rangle$\\
&
$C_2=\langle\mathbf{3124,1324,3214,2314},3412,4312,2413,4213,1423,4123,1243,2143,3241,2341,3421,4231,2431,4321,3124\rangle$
\\\midrule

$7$ &
$C_1=\langle\mathbf{1234},4132,1342,3142,4312,1432,2134,1234\rangle$\\
&
$C_2=\langle\mathbf{3124,1324,3214,2314},3412,4213,2413,1243,4123,1423,2143,3241,2341,3421,4231,2431,4321,3124\rangle$
\\\midrule

$8$ &
$C_1=\langle\mathbf{1234},4132,1342,3412,4312,3142,1432,2134,1234\rangle$\\
&
$C_2=\langle\mathbf{3124,1324,3214,2314},4213,2413,1243,4123,1423,2143,3241,2341,3421,4231,2431,4321,3124\rangle$
\\\midrule

$9$ &
$C_1=\langle\mathbf{1234},2431,4132,1342,3412,4312,3142,1432,2134,1234\rangle$\\
&
$C_2=\langle\mathbf{3124,1324,3214,2314},4213,2413,1243,4123,1423,2143,3241,2341,4231,3421,4321,3124\rangle$
\\\midrule

$10$ &
$C_1=\langle\mathbf{1234},4132,1342,2143,4213,3412,4312,3142,1432,2134,1234\rangle$\\
&
$C_2=\langle\mathbf{1324,3124,2314,3214},2413,1243,4123,1423,4321,3241,2431,4231,2341,3421,1324\rangle$
\\\midrule

$11$ &
$C_1=\langle\mathbf{1234},4132,1342,2143,1423,4213,3412,4312,3142,1432,2134,1234\rangle$\\
&
$C_2=\langle\mathbf{1324,3124,2314,3214},2413,4123,1243,2341,4231,2431,3241,4321,3421,1324\rangle$
\\\midrule

$12$ &
$C_1=\langle\mathbf{1234},4132,1342,2143,1243,2413,4213,3412,4312,3142,1432,2134,1234\rangle$\\
&
$C_2=\langle\mathbf{3124,2314,3214,1324},4123,3421,4231,2341,3241,2431,4321,1423,3124\rangle$
\\

\bottomrule
\end{tabular}

\end{minipage} 
\end{adjustbox}
\end{table}

\begin{table}[htbp]
\centering
\begin{adjustbox}{angle=90,max height=\textheight} 
\begin{minipage}{\textheight} 
\centering
\caption{Some disjoint cycles in $S_4^2$ containing $u=1234$ and $V(S_4^{2:2})$ respectively.}
\label{3-13}

\footnotesize
\setlength{\tabcolsep}{2pt}
\renewcommand{\arraystretch}{0.9}

\begin{tabular}{@{}c l@{}}
\toprule
$\ell$ & Cycles \\
\midrule

$3$ &
$C_1=\langle\mathbf{1234},3124,2314,1234\rangle$\\
&
$C_2=\langle2134,\mathbf{1432,4132,1342,3142,4312,3412},4213,2143,1243,2413,4123,1423,4321,3241,2431,4231,2341,3421,1324,3214,2134\rangle$\\
\midrule

$4$ &
$C_1=\langle\mathbf{1234},2134,1324,3124,1234\rangle$\\
&
$C_2=\langle\mathbf{4132,1432,3142,1342,3412,4312},3214,2314,4213,2413,1243,4123,1423,2143,3241,2341,4231,3421,4321,2431,4132\rangle$\\

$5$ &
$C_1=\langle\mathbf{1234},2134,3214,1324,3124,1234\rangle$\\
&
$C_2=\langle\mathbf{4132,1432,4312,3142,1342,3412},2314,4213,2413,1243,4123,1423,2143,3241,2341,4231,3421,4321,2431,4132\rangle$
\\
\midrule

$6$ &
$C_1=\langle\mathbf{1234},2134,1324,3214,2314,3124,1234\rangle$\\
&
$C_2=\langle\mathbf{4132,1432,4312,3142,1342,3412},4213,2413,1243,4123,1423,2143,3241,2341,4231,3421,4321,2431,4132\rangle$
\\
\midrule

$7$ &
$C_1=\langle\mathbf{1234},2134,3214,1324,3124,4321,2431,1234\rangle$\\
&
$C_2=\langle\mathbf{1432,4132,1342,3142,4312,3412},2314,4213,2413,1243,4123,1423,2143,3241,2341,3421,4231,1432\rangle$
\\
\midrule

$8$ &
$C_1=\langle\mathbf{1234},2314,3214,2134,1324,3124,4321,2431,1234\rangle$\\
&
$C_2=\langle\mathbf{1432,4132,1342,3142,4312,3412},4213,2413,1243,4123,1423,2143,3241,2341,3421,4231,1432\rangle$
\\
\midrule

$9$ &
$C_1=\langle\mathbf{1234},2314,3214,2134,1324,3124,4321,3241,2431,1234\rangle$\\
&
$C_2=\langle\mathbf{1432,4132,1342,3142,4312,3412},4213,2413,4123,1423,2143,1243,2341,3421,4231,1432\rangle$
\\
\midrule

$10$ &
$C_1=\langle\mathbf{1234},2134,1324,3214,2314,4213,2413,4123,1423,3124,1234\rangle$\\
&
$C_2=\langle\mathbf{1432,4132,1342,3412,4312,3142},1243,2143,3241,2341,3421,4321,2431,4231,1432\rangle$
\\
\midrule

$11$ &
$C_1=\langle\mathbf{1234},2134,1324,3214,2314,4213,2413,1243,4123,1423,3124,1234\rangle$\\
&
$C_2=\langle\mathbf{1432,4132,3412,4312,3142,1342},2143,3241,2341,3421,4321,2431,4231,1432\rangle$
\\
\midrule

$12$ &
$C_1=\langle\mathbf{1234},2134,1324,3214,2314,4213,2143,1243,2413,4123,1423,3124,1234\rangle$\\
&
$C_2=\langle\mathbf{1432,4132,3412,4312,3142,1342},3241,2341,3421,4321,2431,4231,1432\rangle$
\\

\bottomrule
\end{tabular}

\end{minipage} 
\end{adjustbox}
\end{table}


\newpage
	\begin{thebibliography}{00}
		\bibitem{Li25} H. Li, L. Chen and M. Lu, Two-disjoint-cycle-cover pancyclicity of split-star networks,
		\textit{Applied Mathematics and Computation}, 487 (2025), 129085.

        \bibitem{CLP01} E. Cheng, M. J. Lipman and H. Park, Super connectivity of star graphs, alternating group graphs and split-stars,
        \textit{Ars Combinatoria}, 59 (2001), 107--116.

        \bibitem{CL00} E. Cheng and M. J. Lipman, Orienting split-stars and alternating group graphs,
        \textit{Networks: An International Journal}, 35 (2000), 139--144.


        \bibitem{ZH18} S. L. Zhao and R. X. Hao, The generalized connectivity of some regular graphs,
        \textit{arXiv preprint} arXiv:1808.10074 (2018).

        \bibitem{GHC19} M. M. Gu, R. X. Hao and J. M. Chang, Measuring the vulnerability of alternating group graphs and split-star networks in terms of component connectivity,
        \textit{IEEE Access}, 7 (2019), 97745--97759.

        \bibitem{GHC21} M. M. Gu, R. X. Hao and J. M. Chang, Reliability analysis of alternating group graphs and split-stars,
        \textit{The Computer Journal}, 64 (2021), 1425--1436.

        \bibitem{LXZH15} L. M. Lin, L. Xu, S. M. Zhou and S. Y. Hsieh, The extra, restricted connectivity and conditional diagnosability of split-star networks,
        \textit{IEEE Transactions on Parallel and Distributed Systems}, 27 (2015), 533--545.

        \bibitem{LHWX20} L. M. Lin, Y. Z. Huang, X. D. Wang and L. Xu, Restricted connectivity and good-neighbor diagnosability of split-star networks,
        \textit{Theoretical Computer Science}, 824 (2020), 81--91.

        \bibitem{ZW23} L. N. Zhao and S. Y. Wang, Structure connectivity and substructure connectivity of split-star networks,
        \textit{Discrete Applied Mathematics}, 341 (2023), 359--371.

        \bibitem{QMS22} H. Qiao, J. Meng and E. Sabir, Embedding spanning disjoint cycles in enhanced hypercube networks with prescribed vertices in each cycle,
        \textit{Applied Mathematics and Computation}, 435 (2022), 127481.

        \bibitem{NXL21} R. Niu, M. Xu and H. J. Lai, Two-disjoint-cycle-cover vertex bipancyclicity of the bipartite generalized hypercube,
        \textit{Applied Mathematics and Computation}, 400 (2021), 126090.


        \bibitem{WHC20} C. Wei, R. X. Hao and J. M. Chang, Two-disjoint-cycle-cover bipancyclicity of balanced hypercubes,
        \textit{Applied Mathematics and Computation}, 381 (2020), 125305.

        \bibitem{BM08} J. A. Bondy and U. S. R. Murty,
        \textit{Graph Theory},Springer, New~York, 2008.

        \bibitem{KC15} T. L. Kung and H. C. Chen, Complete cycle embedding in crossed cubes with two-disjoint-cycle-cover pancyclicity,
        \textit{IEICE Transactions on Fundamentals of Electronics, Communications and Computer Sciences}, E98~(12) (2015), 2670--2676.

        \bibitem{LLC21} J. Li, X. J. Li and E. Cheng, Super spanning connectivity of split-star networks,
        \textit{Information Processing Letters}, 166 (2021), 106037.















		
	\end{thebibliography}
\end{document}